\theoremstyle{plain}
\newtheorem{theorem}{Theorem}
\newtheorem{lemma}{Lemma}
\newtheorem{corollary}{Corollary}
\newtheorem{proposition}{Proposition}
\newtheorem{remark}{Remark}
\begin{document}

% Your \newcommands below (if there are any):

\oddsidemargin 16.5mm
\evensidemargin 16.5mm

\thispagestyle{plain}

%\begin{center}
%{\large \sc  Applicable Analysis and Discrete Mathematics}

%{\small available online at  http:/$\!$/pefmath.etf.rs }
%\end{center}

%\noindent{\small{\sc  Appl. Anal. Discrete Math.\ }{\bf x} (xxxx),
%xxx--xxx.} \hfill{\scriptsize doi:10.2298/AADMxxxxxxxx}

\vspace{5cc}
\begin{center}

{\large\bf  DOMINATING $2$-BROADCAST IN GRAPHS: COMPLEXITY, BOUNDS AND EXTREMAL GRAPHS
\rule{0mm}{6mm}\renewcommand{\thefootnote}{}
\footnotetext{\scriptsize 
E-mail: \texttt{\{jcaceres,mpuertas\}@ual.es}, \texttt{\{carmen.hernando,merce.mora,ignacio.m.pelayo\}@upc.edu}
}}

\vspace{1cc}
{\large\it J. C\'aceres$^1$, C. Hernando$^2$, M. Mora$^2$, I.M. Pelayo$^2$, M.L. Puertas$^1$}
\end{center}
{\it $^1$ Department of Mathematics, Universidad de Almer\'ia, Spain\\
$^2$ Department of Mathematics, Universitat Polit\`{e}cnica de Catalunya, Spain}

\begin{center}
\vspace{1cc}
\parbox{24cc}{{

\small
\vspace{1.5cc}
{\bf Abstract}
\\
Limited dominating broadcasts were proposed as a variant of dominating broadcasts, where the broadcast function is upper bounded. As a natural extension of domination, we consider dominating $2$-broadcasts along with the associated parameter, the dominating $2$-broadcast number. We prove that computing the dominating $2$-broadcast number is a NP-complete problem, but can be achieved in linear time for trees. We also give an upper bound for this parameter, that is tight for graphs as large as desired.
\vspace{1cc}

\noindent {\bf 2010 Mathematics Subject Classification.} 05C12, 05C69.\\
{\bf Keywords.} Broadcast. Domination. NP-complete decision problem.
}}

\end{center}

\vspace{1cc}

\vspace{1.5cc}
\begin{center}
{\bf 1. INTRODUCTION}
\end{center}

Graphs are a widely used tool to represent and study networks, understood in a broad sense, from road or electric networks to networks of distribution or social networks. One of the fundamental problems that has traditionally been studied in this area is the optimization of resources in a network and the properties of domination in graphs have played an essential role in the modeling and resolution of this type of issues, since this concept was originally defined in the late fifties \cite{Berge58} and named in the early sixties \cite{Ore62}.

A dominating set represents the places in the network where a resource must be placed to which all points of the network must have access and a dominant set of minimum cardinal is the optimal way to distribute that resource along the network. It is well known that there are multiple variants of classical domination, which refer to some particular aspect that one wishes to highlight in a special way. Regarding broadcast domination introduced in \cite{Liu68,Erwin04}, the idea behind this concept is to model several broadcast stations, with associated transmission powers, that can broadcast messages to places at distance greater than one. The original definition does not take into account any limitation of the power of such transmission, that is, it assumes that any point of the network can be reached with a transmission from any origin. This assumption may not be entirely accurate in the case of very large networks or simply due to the use of limited power transmitters. These restrictions naturally give rise to the limited version of the dominating broadcast, in which each transmitter, regardless of where it is placed, has a fixed transmission range. This idea was already suggested in \cite{DEHHH06}, as a possible extension of the original definition of dominating broadcast, although it is not studied yet.

We devote this paper to deepening this concept of limited broadcast domination, in case the transmitting power is limited by $2$, pointing out fundamental differences with the non-limited version. It is known that the computation of the domination broadcast number if polynomial in general graphs (see~\cite{HL06}), however the limited version does not share this property but follow the behaviour of other domination-type parameters, such as the domination number itself: it is an NP-complete problem in general graphs although it is linear in trees. After proving that the computation of this parameter is not easy, our main objective is to determine an upper bound of the minimum number of transmitters, with power limited by $2$, that we need to cover an arbitrary network.

The formal definition of dominating broadcast, taken from \cite{Erwin04}, is as follows: For a graph connected $G$ any function $f\colon V(G)\to \{0,1,\dots ,diam(G)\}$, where $diam(G)$ denotes the diameter of $G$, is called a \emph{broadcast} on $G$.
A vertex $v\in V(G)$ with $f(v)>0$ is a \emph{f-dominating vertex} and it is said to \emph{f-dominate} every vertex $u$ with $d(u,v)\leq f(v)$.
A \emph{dominating broadcast} on $G$ is a broadcast $f$ such that every vertex in $G$ is $f$-dominated and the \emph{cost} of $f$ is $\omega(f)=\sum_{v\in V(G)} f(v)$. Finally, the \emph{dominating broadcast number}, denoted as $\gamma_{\stackrel{}{B}}(G)$, is the minimum cost among all the dominating broadcasts in $G$.

A number of issues has been addressed regarding this variation of domination. For instance the role of the dominating broadcast number into the domination chain is discussed in \cite{DEHHH06}, and the characterization of graphs where the dominating broadcast number reaches its natural upper bounds, radius and domination number, is given in \cite{CHM11,HM09,MW13}. However, the most interesting feature about dominating broadcasts is that $\gamma_{\stackrel{}{B}}(G)$ can be computed in polynomial time (see \cite{HL06}) in any graph $G$. This is quite counter-intuitive since computing almost any domination parameter is in NP.

As the classical domination notion can be viewed as a limited dominating broadcast with range one, we follow the suggestion posed in \cite{DEHHH06} as the open problem of considering the broadcast dominating problem with a limited broadcast power of two.
We formally define the concept of dominating $2$-broadcast in Section~2 and we present some basic properties regarding the associated parameter: the $2$-broadcast dominating number. In Section~3, it is proved that computing the dominating $2$-broadcast number is NP-complete in general but can be done in linear time for trees. Section~4 is devoted to obtain a general upper bound for the dominating $2$-broadcast number and the family of trees that attain this bound is characterized. Finally in Section~5, having in mind that the upper bound in general graphs has been obtained using spanning trees, we study the behaviour of the parameter in graphs with very simple spanning trees, such are graphs having a dominating path.

All graphs considered in this paper are finite, undirected, simple and connected.
The \emph{open neighborhood} of a vertex $v$ is $N(v)$ the set of its neighbors and its  \emph{closed neighborhood} is $N[v]=N(v)\cup \{v\}$.
A \emph{leaf} is a vertex of degree one and its unique neighbor is a \emph{support vertex}.
For a pair of vertices $u,v$ the distance $d(u,v)$ is the length of a shortest path between them.
For any graph $G$, the \emph{eccentricity} of a vertex $u\in V(G)$ is
$\max \{ d(u,v) : v\in V(G) \}$ and is denoted by $ecc_G(u)$.  The maximum (resp. minimum) of the eccentricities among all the vertices of $G$ is the \emph{diameter} (resp. \emph{radius}) of $G$, denoted respectively by $diam (G)$ and $rad (G)$. Two vertices $u$ and $v$ are \emph{antipodal} in $G$ if $d(u,v)=diam(G)$.
A \emph{caterpillar} is a tree such that the set of vertices of degree greater than one induces a path.
Given a tree $T$, a vertex in $u\in V(T)$ and an edge $e\in E(T)$, the tree $T(u,e)$ is the subtree containing $u$, obtained from $T$ by deleting the edge $e$.
For further undefined general concepts of Graph Theory, see \cite{CLZ11}.

\vspace{1.5cc}
\begin{center}
{\bf 2. DOMINATING $2$-BROADCAST}
\end{center}

We begin this section with the formal definition of dominating $2$-broadcast, following the ideas proposed in \cite{DEHHH06}.
To our knowledge, this concept has not {previously} been studied, except in \cite{JK13} where some straightforward properties are introduced.

Let $G$ be a graph. For any function $f\colon V(G)\to \{0,1,2\}$, we define the sets $V_f^0=\{u\in V(G)\colon f(u)=0\}$ and $V_f^+=\{v\in V(G)\colon f(v)\geq 1\}$.
We say that $f$ is a \emph{dominating 2-broadcast} if for every $u\in V(G)$ there exists a vertex $v\in V_f^+$ such that {$d(u,v)\leq f(v)$. In such a case, we say that $u$ \emph{hears} $v$.}
The \emph{cost} of a dominating $2$-broadcast $f$ is $\omega(f)=\sum_{u\in V(G)} f(u)=\sum_{v\in V_f^+} f(v)$.
Finally, the \emph{dominating 2-broadcast number} of $G$ is $$\gamma_{\stackrel{}{B_2}}(G)=\min\{ \omega(f)\colon f \text{ is a dominating 2-broadcast on } G\}.$$
Moreover, a dominating $2$-broadcast with cost $\gamma_{\stackrel{}{B_2}}(G)$ is called \emph{optimal}.

It is clear from the definition that any dominating set $S$ in a graph $G$ with cardinality $\gamma(G)$ leads to a dominating $2$-broadcast $f$ where $f(v)=1$ if $v\in S$ and $f(u)=0$ for any other vertex. The cost of such $2$-broadcast is $\omega(f)=\gamma(G)$. On the other hand, an optimal dominating $2$-broadcast is also a dominating broadcast. All these relationships can be summarized with the following inequalities:

$$\gamma_{\stackrel{}{B}}(G)\leq \gamma_{\stackrel{}{B_2}}(G) \leq \gamma (G)$$

In general, parameters $\gamma_{\stackrel{}{B}}, \gamma_{\stackrel{}{B_2}}$ and $\gamma$ are different, as shown in the example of Figure~\ref{fig:differentparameters} where $\gamma_{\stackrel{}{B}}(G)=3$, $\gamma_{\stackrel{}{B_2}}(G)=4$ and $\gamma (G)=5$ (circled vertices has non-zero image in an optimal broadcast-type function, these images are also shown).

\begin{figure}[h]
 \captionsetup[subfigure]{labelformat=empty}
 \begin{center}
    \subfigure[$\gamma_{\stackrel{}{B}}(G)=3$]{\includegraphics[width=0.22\textwidth]{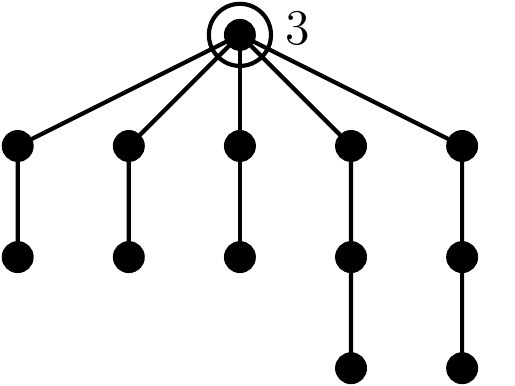}}\hspace{1cm}
    \subfigure[$\gamma_{\stackrel{}{B_2}}(G)=4$]{\includegraphics[width=0.22\textwidth]{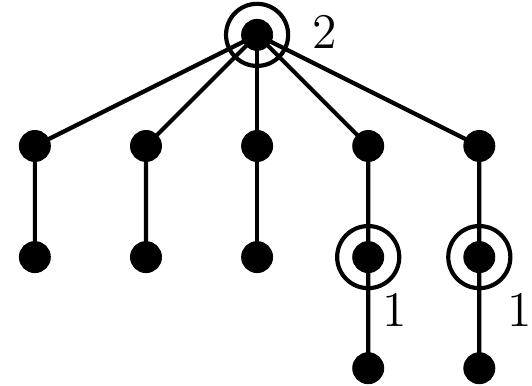}}\hspace{1cm}
    \subfigure[$\gamma (G)=5$]{\includegraphics[width=0.22\textwidth]{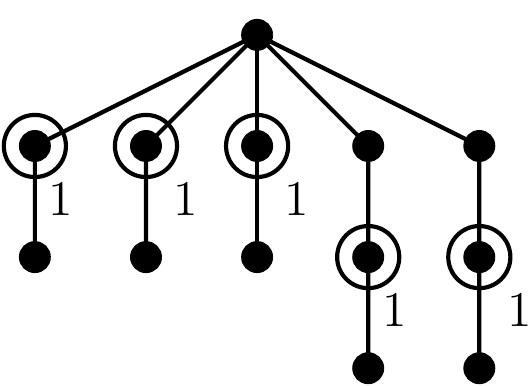}}
\end{center}
\caption{Parameters $\gamma_{\stackrel{}{B}}, \gamma_{\stackrel{}{B_2}}$ and $\gamma$ are different.}
\label{fig:differentparameters}
\end{figure}

The radius is an upper bound for the dominating broadcast number $\gamma_{B}$. Although one might think that the radius plays a similar role in comparison with $\gamma_{\stackrel{}{B_2}}$, Figure~\ref{fig:radius} shows that there is no relationship between these two parameters.

\begin{figure}[ht]
\captionsetup[subfigure]{labelformat=empty}
  \begin{center}
    \subfigure[$\!\!rad(G)\!=\!3\!<\!\gamma_{\stackrel{}{B_2}}(G)\!=\!4$]{\includegraphics[width=0.275\textwidth]{different_2_broadcast}}
    \hspace{0.7cm}
    \subfigure[$\gamma_{\stackrel{}{B_2}}(G)=6<rad(G)=7$]{\includegraphics[width=0.54\textwidth]{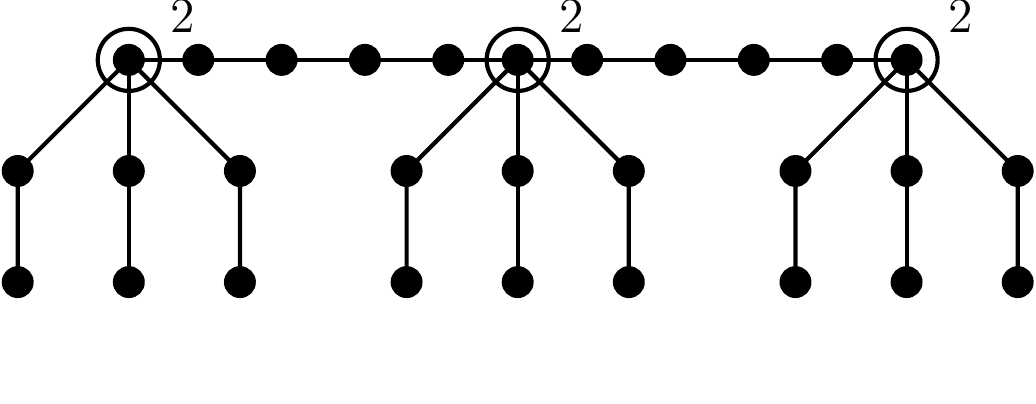}}
    \caption{$\gamma_{\stackrel{}{B_2}}$ is independent from the radius.}
    \label{fig:radius}
\end{center}

\end{figure}

We next present two general properties of parameter $\gamma_{\stackrel{}{B_2}}$ that will be useful in the rest of the paper.
\par\bigskip

\begin{proposition}\label{prop:general} Let $G$ be a graph.
\begin{enumerate}
\item If $e$ is a cut-edge of $G$ and
$G_1,G_2$ are   the connected components of $G-e$, then
$\gamma_{\stackrel{}{B_2}}(G)\leq \gamma_{\stackrel{}{B_2}}(G_1)+\gamma_{\stackrel{}{B_2}}(G_2)$.

\item There exists an optimal dominating $2$-broadcast $f$ such that $f(u)=0$, for every leaf $u$ of $G$.
\end{enumerate}
\end{proposition}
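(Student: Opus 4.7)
The plan for part (1) is a straightforward pasting argument. Take optimal dominating $2$-broadcasts $f_1$ on $G_1$ and $f_2$ on $G_2$ and define $f\colon V(G)\to\{0,1,2\}$ by $f|_{V(G_i)} = f_i$. Because $e$ is a cut-edge, no path in $G$ between two vertices of the same component $G_i$ can use $e$ (it would have to traverse it twice), so $d_G(u,v) = d_{G_i}(u,v)$ whenever $u,v \in V(G_i)$. Consequently every vertex of $G_i$ is $f$-dominated in $G$ by the same vertex that $f_i$-dominated it in $G_i$, and $\omega(f) = \omega(f_1)+\omega(f_2) = \gamma_{\stackrel{}{B_2}}(G_1)+\gamma_{\stackrel{}{B_2}}(G_2)$, giving the desired inequality.

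For part (2), the plan is an extremality argument. Among all optimal dominating $2$-broadcasts on $G$ I would pick one $f$ that also minimizes $\sum_{u\text{ leaf}} f(u)$, and suppose for contradiction that some leaf $u$ has $f(u)\geq 1$. Let $w$ denote its unique neighbor. The key geometric fact to record first is that, because $u$ is a leaf, every path out of $u$ starts with the edge $uw$, so
\[
B_k(u) := \{x\in V(G):d(u,x)\leq k\} = \{u\}\cup B_{k-1}(w) \subseteq B_k(w) \quad \text{for } k\in\{1,2\}.
\]
Two cases then close the argument. If $f(w)\geq 1$, setting $f(u)=0$ still dominates every vertex previously heard only from $u$ (all such vertices lie in $B_1(w)\subseteq B_{f(w)}(w)$) and strictly decreases the cost, contradicting optimality. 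If $f(w)=0$, transfer the value by defining $f'(w)=f(u)$, $f'(u)=0$ and leaving $f$ unchanged elsewhere; the inclusion $B_{f(u)}(u)\subseteq B_{f(u)}(w)$ ensures $f'$ is still a dominating $2$-broadcast, of the same cost, and since $w$ is not a leaf the leaf sum strictly drops, contradicting the choice of $f$.

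The only delicate point is the distance inclusion, which holds precisely because $u$ is a leaf and would fail for internal vertices; this is what forces the restriction to leaves in the statement. The only genuine edge case is $G=K_2$, where both vertices are leaves and the claim literally fails; in every other connected graph the support vertex has degree at least $2$, so the transfer never relocates the broadcast to another leaf and the extremal choice is consistent.
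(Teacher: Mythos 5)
Your proof is correct and follows essentially the same route as the paper's: for (1) paste the two optimal broadcasts across the cut-edge, and for (2) shift positive values from leaves to their support vertices (the paper iterates this shift, decreasing the number of positively-valued leaves one at a time, while you package the same idea as an extremal choice minimizing the leaf sum). Your observation that the statement literally fails for $G=K_2$, where both vertices are leaves, is a genuine edge case that the paper's proof silently ignores; apart from that the two arguments coincide.
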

\begin{proof}

\begin{enumerate}

\item Let $f_1,f_2$ optimal dominating $2$-broadcast on $G_1$ and $G_2$, respectively.
Then, the function $f\colon V(G)\to \{0,1,2\}$ such that $f(v)=f_i(v)$ for any $v\in V(G_i)$, is a dominating $2$-broadcast on $G$ with cost $\omega(f)=\sum_{v\in V_{\!\!f}^+} f(v)=\sum_{v\in V^+_{\!\! f_1}} f_1(v)+\sum_{v\in V_{\!\! f_2}^+} f_2(v)= \gamma_{\stackrel{}{B_2}}(G_1)+\gamma_{\stackrel{}{B_2}}(G_2)$.

\item Suppose that $f$ is an optimal dominating $2$-broadcast on $G$ that assigns a positive value to $r$ leaves. Assume that $f(u)>0$ for a leaf $u$ with support vertex $v$. In such a case, the optimality of $f$ implies that $f(v)=0$. Consider the function $g\colon V(G)\to \{0,1,2\}$ satisfying $g(u)=0$, $g(v)=f(u)$, and $g(w)=f(w)$ if $w\neq u,v$. It is clear that $g$ is a dominating $2$-broadcast with cost {$\omega(g)= \omega(f)$, so $g$ is also optimal and has $r-1$ leaves with positive value. Repeating this procedure as many times as necessary, we obtain an optimal $2$-broadcast that assigns the value $0$ to all leaves.} \end{enumerate}
\end{proof}

In \cite{Herke07}, the authors obtained the following result that uses spanning trees as an essential tool to compute the dominating broadcast number of a graph.

\begin{theorem}\cite{Herke07}\label{thm:herke}
Let $G$ be a graph. Then,
$$\gamma_{\stackrel{}{B}}(G)=\min \{ \gamma_{\stackrel{}{B}}(T)\colon T \text{ is a spanning tree of } G\}.$$
\end{theorem}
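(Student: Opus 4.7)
I would prove the identity by showing two inequalities.

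The easy direction is $\gamma_{\stackrel{}{B}}(G)\le \gamma_{\stackrel{}{B}}(T)$ for every spanning tree $T$ of $G$: since $T$ is a spanning subgraph, deleting edges can only increase distances, so $d_T(u,v)\ge d_G(u,v)$ for all $u,v$. Hence any dominating broadcast $g$ on $T$ is also a dominating broadcast on $G$ of identical cost $\omega(g)$, giving $\gamma_{\stackrel{}{B}}(G)\le \gamma_{\stackrel{}{B}}(T)$.

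For the reverse inequality, it suffices to exhibit one spanning tree $T^\star$ on which an optimal dominating broadcast $f$ of $G$ still dominates; then $\gamma_{\stackrel{}{B}}(T^\star)\le \omega(f)=\gamma_{\stackrel{}{B}}(G)$. Writing $V_f^+=\{v_1,\ldots,v_k\}$, I would assign each $u\in V(G)$ to the broadcaster $\pi(u)=v_i\in V_f^+$ maximizing the slack $\delta_i(u):=f(v_i)-d_G(u,v_i)$, breaking ties by the index $i$. Since $f$ dominates $G$, this maximum is non-negative, so $d_G(u,\pi(u))\le f(\pi(u))$. A routine triangle-inequality computation shows that $\delta_i(w)-\delta_j(w)\ge \delta_i(u)-\delta_j(u)$ whenever $w$ lies on a shortest $u$-$v_i$ path, so the assignment is monotone along shortest paths: if $\pi(u)=v_i$ and $w$ lies on such a path, then $\pi(w)=v_i$. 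Consequently each class $V_i:=\pi^{-1}(v_i)$ is closed under shortest $v_i$-paths, $G[V_i]$ is connected, and a BFS tree $T_i$ of $G[V_i]$ rooted at $v_i$ satisfies $d_{T_i}(v_i,u)=d_G(v_i,u)\le f(v_i)$ for every $u\in V_i$. I would then glue $T_1,\ldots,T_k$ into a single spanning tree $T^\star$ of $G$ by adding $k-1$ inter-class edges of $G$; these exist because $G$ is connected (contract each $T_i$ to a point and extract a spanning tree of the resulting multigraph), and since they leave intra-$T_i$ distances unchanged, $f$ still dominates $T^\star$.

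The main obstacle I anticipate is verifying the slack monotonicity together with the fact that tie-breaking by index is consistent: one must check that any maximizer of $\delta$ at $w$ is also a maximizer at $u$, so that the smallest-index maximizer chosen at $u$ remains the smallest-index maximizer at $w$. This is the technical heart of the argument, and where I would spend the most care; once it is in hand, the gluing step is a routine matroid argument and the remaining verifications are immediate.
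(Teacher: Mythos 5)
Your proof is correct. Note, though, that the paper does not actually prove this theorem itself: it cites Herke's dissertation and only remarks that the known proof rests on the existence, in every graph, of an \emph{efficient} optimal dominating broadcast (one in which every vertex hears exactly one broadcaster), from which the partition of $V(G)$ into connected hearing regions is immediate. You replace that nontrivial existence result with a direct construction from an arbitrary optimal broadcast: assign each vertex to a broadcaster of maximal slack $\delta_i(u)=f(v_i)-d_G(u,v_i)$, ties broken by index. Your monotonicity inequality $\delta_i(w)-\delta_j(w)\ge\delta_i(u)-\delta_j(u)$ for $w$ on a shortest $u$--$v_i$ path is correct, and it does settle the tie-breaking point you flag as delicate: if $v_j$ maximizes the slack at $w$ while $v_i=\pi(u)$, then $0\le\delta_i(w)-\delta_j(w)$ together with maximality of $v_i$ at $u$ forces $\delta_i(u)=\delta_j(u)$, so $v_j$ was already a maximizer at $u$ and cannot have smaller index than $\pi(u)$; hence the classes are closed under shortest paths to their centers and the BFS-plus-gluing step goes through. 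What your route buys is robustness: efficient optimal broadcasts need not exist once the broadcast power is capped (the paper's $C_7$ example), which is why the paper must prove the $2$-broadcast analogue (Theorem~\ref{thm:spanning}) by a separate greedy assignment of sets $L(v_i)$; your slack argument works verbatim in that limited setting and would give a unified proof of both theorems. One definitional triviality worth a half-sentence in the easy direction: an optimal broadcast on $T$ may take values up to $\mathrm{diam}(T)>\mathrm{diam}(G)$, so cap such values at $\mathrm{diam}(G)$ before reading the function as a broadcast on $G$ (domination is preserved and the cost does not increase).
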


The proof of this result essentially uses the existence of an \emph{efficient} optimal dominating broadcast \cite{DEHHH06} in every graph, that is, a dominating broadcast $f$ with minimum cost such that any vertex $u$ in $G$ is $f$-dominated by exactly one vertex $v$ with $f(v)>0$.
Nevertheless, there is no similar property for dominating $2$-broadcasts in general.
For instance, the cycle $C_7$ satisfies $\gamma_{\stackrel{}{B_2}}(C_7)=3$ and however, it has no {efficient} optimal dominating $2$-broadcast with cost equal to $3$ (see Figure~\ref{ex:c7}).
Despite this fact, we can get a result similar to that of Theorem~\ref{thm:herke}, by means of an specific construction.

\begin{figure}[htbp]
\begin{center}
\includegraphics[width=0.45\textwidth]{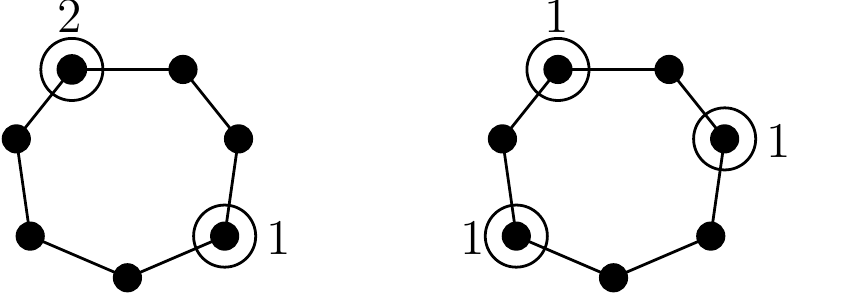}
\caption{There are {exactly} two non-isomorphic optimal dominating {$2$-broadcasts on} the cycle $C_7$.}
\label{ex:c7}
\end{center}
\end{figure}

\begin{theorem}\label{thm:spanning}
Let $G$ be a graph. Then,
$$\gamma_{\stackrel{}{B_2}}(G)=\min \{ \gamma_{\stackrel{}{B_2}}(T)\colon T \text{ is a spanning tree of } G\}.$$
\end{theorem}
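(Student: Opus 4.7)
The plan is to prove the two inequalities separately. The direction $\gamma_{\stackrel{}{B_2}}(G) \le \min \{\gamma_{\stackrel{}{B_2}}(T) : T \text{ spanning tree of } G\}$ is immediate: for any spanning tree $T$ of $G$ and any dominating $2$-broadcast $f$ on $T$, we have $d_G(u,v) \le d_T(u,v)$ for every $u,v \in V(G)$, so the same function $f$ is a dominating $2$-broadcast on $G$ of the same cost, which gives $\gamma_{\stackrel{}{B_2}}(G) \le \gamma_{\stackrel{}{B_2}}(T)$.

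For the reverse inequality I plan to start from an optimal dominating $2$-broadcast $f$ on $G$ and exhibit a spanning tree $T^*$ of $G$ on which $f$ remains a dominating $2$-broadcast; this forces $\min_T \gamma_{\stackrel{}{B_2}}(T) \le \gamma_{\stackrel{}{B_2}}(G)$. First I fix an assignment $\pi\colon V(G) \to V_f^+$ with $d_G(u,\pi(u)) \le f(\pi(u))$ for every $u$ (taking $\pi(u)=u$ whenever $u \in V_f^+$), and for each $u$ with $d_G(u,\pi(u))=2$ I choose a witness $w(u) \in N(u) \cap N(\pi(u))$. Writing $A_v = \pi^{-1}(v) \cap N(v)$ and $B_v = \pi^{-1}(v) \setminus N[v]$ for each $v \in V_f^+$, I build a local rooted tree $T_v$ whose root is $v$, whose depth-$1$ children are the vertices of $A_v$ together with the chosen witnesses $\{w(u) : u \in B_v\}$, and in which each $u \in B_v$ hangs as a depth-$2$ child of $w(u)$. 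By construction every vertex of $\pi^{-1}(v)$ is at distance at most $f(v)$ from $v$ in $T_v$. If $H = \bigcup_{v \in V_f^+} T_v$ is a forest, I extend it to a spanning tree $T^*$ of $G$ by adding arbitrary further edges of $G$ between its components; these extra edges cannot shorten a path already present in $H$, so $d_{T^*}(u,\pi(u)) \le f(\pi(u))$ for every $u$, showing that $f$ is a dominating $2$-broadcast on $T^*$ of cost $\omega(f) = \gamma_{\stackrel{}{B_2}}(G)$.

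The delicate point---where the argument departs from the parallel proof of Theorem~\ref{thm:herke}---is that $H$ need not automatically be a forest: a single vertex can lie in several local trees $T_v$, either because it is the chosen witness for vertices assigned to different dominators or because it belongs to some $A_{v'}$ while also being a witness for a vertex of $B_v$. A representative obstruction is a $4$-cycle through $v, w_1, v', w_2$ where $w_1, w_2 \in A_{v'}$ are both used as witnesses of depth-$2$ vertices of $v$. The crux of the proof is thus to show that, by building $H$ greedily and refining the choices of $\pi$ and $w(u)$ on the fly, every such cycle can be avoided: whenever a tentative insertion of an edge $(w(u),v)$ would close a cycle in the current partial forest, one either re-selects a different common neighbor in $N(u) \cap N(v)$ that is not yet joined to $v$, or notices that $w(u)$ is already adjacent to $v$ in the partial forest, so that adding only $(u,w(u))$ still produces a path of length $2$ from $u$ to $v$. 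Verifying that at least one of these moves is always available is the main technical step; once it is done, the construction above yields $T^*$ as claimed and, combined with the easy direction, finishes the proof.
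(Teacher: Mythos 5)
Your first inequality is fine and matches the paper. The problem is the second half: you correctly identify that the union $H=\bigcup_{v\in V_f^+}T_v$ of your local trees need not be a forest, you correctly call this the crux, and then you do not resolve it --- ``verifying that at least one of these moves is always available is the main technical step'' is precisely the statement that still requires proof, so as written the argument has a hole at its hardest point. Moreover, the two repair moves you offer do not cover all obstructions. Suppose $u\in B_v$ has a \emph{unique} common neighbour $z$ with $v$, that $z\in A_{v'}$ for another dominator $v'$ (so the edge $zv'$ is already in the partial forest), and that the components of $v$ and $v'$ have already been merged by an earlier conflict (say another vertex of $B_v$ whose witness also lies in $A_{v'}$, which puts a path $v$--$w(u_0)$--$v'$ into $H$). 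Then inserting $zv$ closes a cycle, there is no alternative witness to re-select, and $z$ is joined to $v$ in the partial forest only by a path of length at least $3$, so adding only the edge $uz$ leaves $u$ at distance more than $2$ from $\pi(u)=v$ in the final tree. Escaping this requires reassigning $z$ itself (it is at distance $1$ from $v$, so $v$ can dominate it directly), i.e.\ changing $\pi$ at vertices other than $u$, which your local moves never do.

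The paper sidesteps all of this by never letting two local trees share a vertex. It orders $V_g^+=\{v_1,\dots,v_m\}$ by decreasing $g$-value and greedily builds a \emph{partition} $L(v_1),\dots,L(v_m)$ of $V(G)$: first each dominator claims its not-yet-claimed neighbours of value $0$, and then every still-unclaimed vertex $v$ is placed into the part $L(v_j)$ containing an intermediate witness $u\in N(v)$ lying on a length-$2$ path from $v$ to a dominator of value $2$. Since $u\in L(v_j)$ with $g(u)=0$ forces $u\in N(v_j)$ and $g(v_j)=2$, the vertex $v$ ends up within distance $2$ of the centre of \emph{its own part}, even though $v_j$ may not be the dominator that originally dominated $v$. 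Each part induces a connected subgraph, the shortest-path trees rooted at the $v_i$ are vertex-disjoint by construction, and joining the components of the resulting forest with arbitrary edges of $G$ gives a spanning tree on which $g$ is still a dominating $2$-broadcast. If you want to salvage your version, the fix is the same idea: do not insist that each vertex stays attached to the dominator you first chose for it, but attach it to whichever dominator owns its witness.
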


\begin{proof}

Let $T$ be a spanning tree of $G$ such that
$\gamma_{\stackrel{}{B_2}}(T)=t=\min \{ \gamma_{\stackrel{}{B_2}}(T)\colon T \text{ is a}$ $\text{spanning tree of } G\}$
and let $f\colon V(T)\to \{0,1,2\}$ be an optimal dominating $2$-broadcast on $T$.
Then, $f$ is also a dominating $2$-broadcast on $G$ and thus $\gamma_{\stackrel{}{B_2}}(G)\leq t$.

Conversely, let $g\colon V(G)\to \{0,1,2\}$ be an optimal dominating $2$-broadcast on $G$. Denote by $V_g^+=\{v_1,\dots ,v_m\}$, with the property $2 \geq g(v_1)\geq g(v_2)\geq \dots \geq g(v_m)\geq 1$.  Define $L(v_1)=\{v_1\}\cup (N(v_1)\cap V_g^0)$ and for every $i\in \{2,\dots , m\}$,
$$L(v_i)=\big(\{v_i\}\cup (N(v_i)\cap V_g^0)\big) \setminus \bigcup\limits_{ j<i} L(v_i).$$
Note that $\bigcup\limits_{i=1}^m L(v_i)$ consists of all vertices in $V_g^+$ and all their neighbors $u$ such that $g(u)=0$. If
$v \in V_g^0\setminus \bigcup\limits_{i=1}^m L(v_i)$, then there exists $w\in V_g^+$ with $g(w)=2$ and $d(w,v)=2$. Moreover, there exists $u\in V_g^0$ such that $d(w,u)=d(u,v)=1$, as $v$ is not a neighbor of any vertex $z$ with $g(z)=1$. By construction, vertex $u$ belongs to exactly one $L(v_j)$ with $g(v_j)=2$.
For each $v\in V_g^0\setminus \bigcup\limits_{i=1}^m L(v_i)$, choose the unique $v_j$ satisfying these conditions and include $v$ in $L(v_j)$.

After this process, we obtain a partition $\{ L(v_1), \dots L(v_m)\}$ of $V(G)$ such that the subgraph induced by $L(v_i)$ is connected for every $i\in \{1,\dots, m\}$. Let $T_i$ the tree rooted in $v_i$ with vertex set $L(v_i)$, obtained by keeping a minimal set of edges of $G$ ensuring that $d_{T_i}(v_i,x)=d_G(v_i,x)$, for every $x\in L(v_i)$, and deleting the rest of edges. It is possible to construct a spanning tree of $G$ by adding
some edges of $G$ to the forest $T_1,T_2,\dots T_m$  in order to get a connected graph $T$ with no cycles.
The property $d_{T_i}(v_i, x)=d_G(v_i, x)\leq g(v_i)$ for every $x\in V(T_i)$, ensures that $g\colon V(T)\to \{0,1,2\}$ is a dominating $2$-broadcast on the spanning tree $T$, so $t\leq \gamma_{\stackrel{}{B_2}}(T)\leq \omega (g)=\gamma_{\stackrel{}{B_2}}(G)$.
\end{proof}

\vspace{1.5cc}
\begin{center}
{\bf 3. COMPUTATIONAL COMPLEXITY}
\end{center}

We now focus on the computational point of view of the calculation of the parameter $\gamma_{\stackrel{}{B_2}}$. The classical {\sc dominating set problem} is a well-known NP-complete decision problem~\cite{GJ79}, in the same way that many others domination related problems. On the other hand, a polynomial algorithm to compute an optimal broadcast domination function of a graph $G$ was quite surprisingly obtained in~\cite{HL06}. So the question arises with following decision problem, is it polynomial or does it behave like most domination-type parameters?

\par\bigskip

\fbox{
\begin{minipage}{10.5cm}
{\sc dominating $2$-broadcast problem}\\
{\sc instance}: A graph $G$ of order $n$ and an integer $k\geq 2$.\\
{\sc question}: Does $G$ have a dominating $2$-broadcast with cost $\leq k$?
\end{minipage}
}

\par\bigskip

We next show that this decision problem is NP-complete for general graphs, by using a reduction of {\sc 3-sat problem}. Therefore, it is a similar situation to the classical dominating number.

\par\bigskip

\begin{theorem}
{\sc dominating $2$-broadcast problem} is NP-complete.
\end{theorem}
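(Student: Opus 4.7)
The plan is, first, to establish membership in NP: given a candidate function $f\colon V(G)\to\{0,1,2\}$, one can compute all pairwise distances in polynomial time and then verify in $O(n^2)$ that $f$ is a dominating $2$-broadcast and that $\omega(f)\le k$. For NP-hardness I would reduce from the classical {\sc 3-sat problem}. Given an instance $\Phi$ with variables $x_1,\dots,x_n$ and clauses $C_1,\dots,C_m$, the goal is to construct a graph $G_\Phi$ and an integer $k$ such that $\gamma_{\stackrel{}{B_2}}(G_\Phi)\le k$ if and only if $\Phi$ is satisfiable.

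For each variable $x_i$ I would attach a variable gadget $X_i$ containing two distinguished literal vertices $T_i$ and $F_i$ representing $x_i$ and $\bar{x}_i$, together with a few pendants or small auxiliary structures designed so that any normalized optimal dominating $2$-broadcast on $X_i$ pays a fixed cost $c_v$, and so that achieving that cost essentially forces $f(T_i)\ge 1$ or $f(F_i)\ge 1$ (ruling out the cheat of placing a single value-$2$ broadcast on a central vertex that could then also reach unrelated parts of the graph). For each clause $C_j=\ell_{j,1}\vee\ell_{j,2}\vee\ell_{j,3}$ I would add a clause vertex $c_j$ and connect it through short paths to the three literal vertices corresponding to its literals, attaching enough pendants or padding so that the only economical way to dominate $c_j$ is via $f(T_i)\ge 1$ or $f(F_i)\ge 1$ for one of the relevant literal vertices. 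Setting $k = n\cdot c_v$ plus any forced clause overhead completes the reduction.

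To prove the equivalence, I would argue as follows. From a satisfying truth assignment $\tau$, build a dominating $2$-broadcast by activating $T_i$ when $\tau(x_i)=1$ and $F_i$ when $\tau(x_i)=0$, plus any forced values dictated by the gadgets; every clause vertex is then dominated by one of its satisfying literals, and the cost equals $k$. Conversely, starting from a dominating $2$-broadcast of cost at most $k$, I would first apply Proposition~\ref{prop:general}(2) to move all positive values off the leaves, then use the tightness of the variable gadgets to conclude that exactly one of $T_i,F_i$ has positive value in each $X_i$, and finally interpret these choices as a truth assignment $\tau$ that must satisfy every $C_j$, because otherwise some $c_j$ would remain undominated.

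The main obstacle is the design and verification of these gadgets, precisely because a vertex with $f$-value $2$ dominates at distance two: a single such broadcast might simultaneously cover parts of two different gadgets or dominate a clause vertex without any literal being active. Dealing with this requires inserting sufficient padding, for instance subdividing edges between gadgets or attaching absorbing leaves, so that a value-$2$ broadcast placed inside one gadget cannot contribute to dominating any vertex outside it, while keeping the construction polynomial in the size of $\Phi$. A careful case analysis on the few non-leaf vertices of each gadget, invoking Proposition~\ref{prop:general}(2) to confine positive values to non-leaves, is what I expect to be the most delicate step.
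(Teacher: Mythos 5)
Your skeleton coincides with the paper's: membership in NP by direct verification, and NP-hardness via a reduction from {\sc 3-sat} using variable gadgets with two literal vertices and clause vertices attached to the literals occurring in each clause. But as a proof the proposal is incomplete, and the gap is exactly the one you flag yourself: the gadgets are never constructed. In a hardness reduction all of the mathematical content lives in the precise gadget structure and in the verification that the budget $k$ is achievable if and only if the formula is satisfiable; ``attach enough pendants or padding so that the only economical way to dominate $c_j$ is via an active literal'' is a specification of what the gadget must do, not an argument that such a gadget exists. There is also a substantive design decision you leave open that affects correctness: you propose forcing only $f(T_i)\ge 1$ or $f(F_i)\ge 1$, but a value-$1$ literal cannot dominate a clause vertex at distance $2$, while if you instead make the clause vertex adjacent to its literals, the cross-gadget contamination you worry about (one value-$2$ broadcast covering a clause vertex without any literal being ``chosen'') becomes a real threat rather than a hypothetical one.

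For comparison, the paper resolves all of this with a very economical construction and no padding or subdivision at all. Each variable $u_i$ gets a $6$-vertex gadget containing $u_i$ and $u_i'$, each carrying a pendant path of length $2$; the two leaves of the gadget are at distance at least $3$ from everything outside it, so each gadget independently costs at least $2$, and the only ways to pay exactly $2$ are $f(u_i)=2$, or $f(u_i')=2$, or value $1$ on both support vertices. Each clause is a pendant edge $C_j\widehat{C_j}$ with $\widehat{C_j}$ adjacent to the three literal vertices, so $C_j$ sits at distance exactly $2$ from them and can only be heard from a literal vertex carrying value $2$. Taking $k=2n$ with no clause overhead then forces the entire budget into the gadgets, hence $f(C_j)=f(\widehat{C_j})=0$, hence each $C_j$ hears some literal vertex with value $2$, which yields the satisfying assignment. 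Your outline would become a proof once you commit to gadgets with these isolation and tightness properties and carry out the (short) case analysis; without them it remains a plan.
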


\begin{proof}
Since verifying that a given function $f\colon V(G)\to \{0,1,2\}$ is a dominating $2$-broadcast of $G$ can be done in polynomial time, {\sc dominating $2$-broadcast problem} is NP. We will show that it is NP-complete by using a reduction from \textsc{3-sat problem}, quite similar than the one needed for the \textsc{dominating set problem} \cite{HHS98}.

Given an instance $C$ of {\sc 3-sat}, with set of variables $U=\{u_1,\dots, u_n\}$ and clauses $C=\{ C_1, \dots ,C_m\}$, we construct an instance $G(C)$ of {\sc dominating $2$-broadcast problem} as follows. For each variable $u_i$, we consider the gadget $G_i$ in Figure~\ref{fig:gadget}. For each clause $C_j=\{u_k, u_l, u_r\}$, we create a pair of vertices $C_j, \widehat{C_j}$ and we add edges $C_j\widehat{C_j}, u_k\widehat{C_j}, u_l\widehat{C_j}, u_r\widehat{C_j}$ (see Figure~\ref{fig:clause}). Therefore, the graph $G(C)$ has $6n+2m$ vertices and $6n+4m$ edges and it is certainly constructible from the instance $C$ of {\sc 3-sat} in polynomial time.

\begin{figure}[h]
\begin{center}
\includegraphics [width=0.15\textwidth]{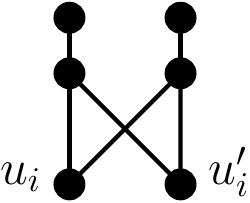}
\caption{Gadget associated to variable $u_i$. }\label{fig:gadget}
\end{center}
\end{figure}

\begin{figure}[h]
\begin{center}
\includegraphics [width=0.7\textwidth]{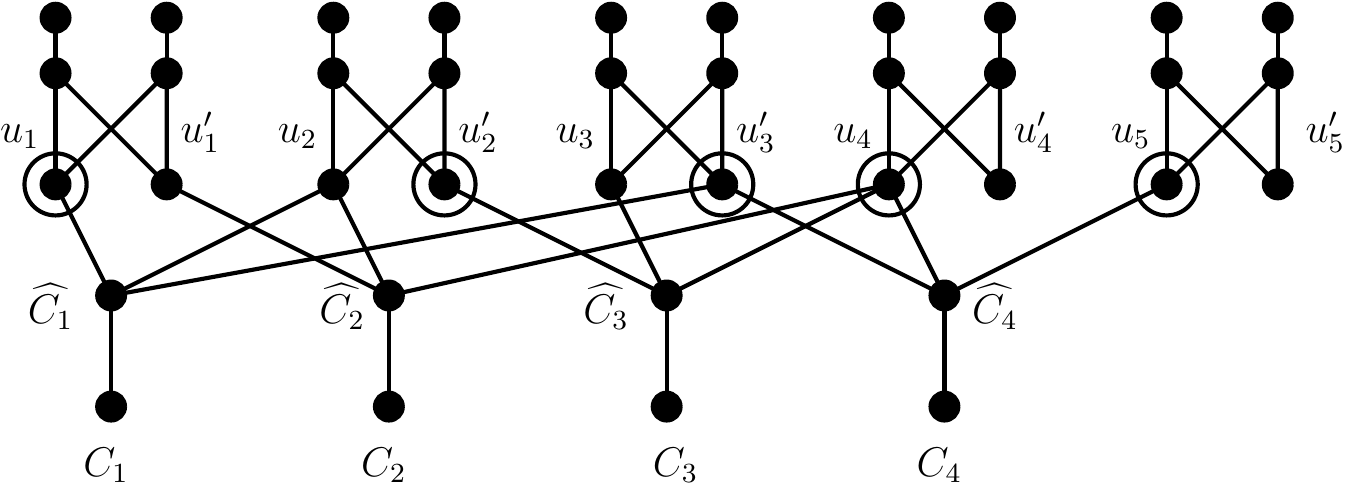}
\caption{A dominating $2$-broadcast in $G(C)$, with $f(v)=2$ for every circled vertex and $f(u)=0$ otherwise.}\label{fig:clause}
\end{center}
\end{figure}

Next we show that $C$ has a satisfying truth assignment if and only if the
graph $G(C)$ has a dominating $2$-broadcast with cost at most $2n$. Firstly, suppose that $C$ has a satisfying truth assignment. Then, the function $f\colon V(G(C))\to \{0,1,2\}$ such that $f(u_i)=2, f(u'_i)=0$ if $u_i$ is true, $f(u_i)=0, f(u'_i)=2$ if $u_i$ is false and $f(x)=0$ if $x\neq u_i, u'_i$, is a dominating $2$-broadcast with cost $\omega(f)=2n$.

Conversely, assume that $G(C)$ has a dominating $2$-broadcast $f$ with cost $\omega(f)\leq 2n$. We must show that $C$ has a satisfying truth assignment. Firstly, note that the leaves of gadget $G_i$ are not $f$-dominated by any vertex outside $V(G_i)$, so either $f$ assigns value $1$ to both support vertices in $V(G_i)$ or $f(u_i)=2$  or $f(u'_i)=2$. Therefore $\omega(f)=2n$ and, in particular, $f(C_j)=0$ for each clause $C_j$. Then $C_j$ is $f$-dominated by a vertex $u$ in a gadget $G_i$. Clearly, $u\in \{u_i,u'_i\}$ and $f(u)=2$. Finally, for each variable $u_i$, assign $u_i$ the value True if $f(u_i)=2$, otherwise assign $u_i$ the value False. It is straightforward to see that this is a satisfying truth assignment for $C$.
\end{proof}

We finish this section by proving that  {\sc dominating $2$-broadcast problem}  can be solved in linear time in the family of trees. Although the linear-time algorithm for trees is not explicitly devised, we prove that the problem is under the conditions to apply the systematic approach to constructing such algorithms suggested in~\cite{blw}.

Let $(T,r)$ be a tree rooted in $r$ and let $f:V(T)\to \{0,1,2\}$. We say that $f$ is a \emph{dominating 2-broadcast function on $T$ except for a subset $S$ of vertices} if any vertex outside $S$ is $f$-dominated and $\forall v\in S, \nexists w\in V(T)$ with $f(w)>0$, such that $d(v,w)\leq f(w)$ (without loss of generality, we say \emph{except for a vertex $u$} when $S$ consists of only one vertex). This allows us to define the  class $\Gamma_f$ as the set of triples $(T,r,f)$ where $(T,r)$ is a rooted tree with root $r$, and $f$ is a dominating 2-broadcast on $T$ except perhaps for a set $S\subseteq V(G)$.

The approach followed in~\cite{blw} is heavily based on the fact that any element in $\Gamma_f$ can be obtained recursively by a rule of composition. The \emph{composition} is defined to be $(T_1, r_1, f_1)\circ (T_2, r_2, f_2)=(T, r_1,f)$ where $V(T)=
V(T_1)\cup V(T_2)$, $E(T)=E(T_1)\cup E(T_2)\cup\{r_1r_2\}$, and the function $f$ is defined as follows:

\par\bigskip

$$
f(v)=\left \{\begin{array}{cc} f_1(v) & \textrm{ if }v\in V(T_1)\\ f_2(v) & \textrm{ if }v\in V(T_2)\end{array} \right.
$$
\par\bigskip

Now $\Gamma_f$ is recursively constructed by using the composition and three primitive elements, namely $(K_1,v,f_0)$, $(K_1,v,f_1)$ and $(K_1,v,f_2)$, where $V(K_1)=\{v\}$ and $f_i(v)=i$ for $i\in\{0,1,2\}$.

We are going to define next a pair $(C,\times)$ where the elements of $C$ are called \emph{classes} (following the notation given in~\cite{blw}) with a multiplicative operation (not necessarily associative nor commutative). Each class captures a certain situation during the construction of a dominating 2-broadcast, and the product of two classes produces the class representing the derived situation.

For our purposes, we distinguish eight different classes
$$C=\{C_1,C_2,C_3,C_4,C_5,C_6,C_7,C_8\}.$$
The triples of $\Gamma_f$ are associated to their classes via the application $h:\Gamma_f\to C$ in the following way:
\begin{itemize}
\item $h((T,r,f))=C_1$ if $f$ is a dominating 2-broadcast function on $T$ and $f(r)=2$.
\item $h((T,r,f))=C_2$ if $f$ is a dominating 2-broadcast function on $T$ and $f(r)=1$.
\item $h((T,r,f))=C_3$ if $f$ is a dominating 2-broadcast function on $T$, $f(r)=0$ and $f(u)=0\ \forall u\in N(r)$ (in particular this implies that $\exists w$ with $f(w)=2$ and $d(r,w)=2$).
\item $h((T,r,f))=C_4$ if $f$ is a dominating 2-broadcast function on $T$, $f(r)=0$, $\exists w\in N(r)$ with $f(w)=1$ and $f(u)\leq 1, \forall u\in N(r)$.
\item $h((T,r,f))=C_5$ if $f$ is a dominating 2-broadcast function on $T$, $f(r)=0$ and $\exists w\in N(r)$ with $f(w)=2$.
\item $h((T,r,f))=C_6$ if $f$ is dominating 2-broadcast function on $T$ except for $S=\{r\}$.
\item $h((T,r,f))=C_7$ if $f$ is a dominating 2-broadcast function on $T$ except for $S\subseteq N[r]$, $\emptyset \neq S\neq \{r\}$.
\item $h((T,r,f))=C_8$ if $f$ is a dominating 2-broadcast function on $T$ except for $S\nsubseteq N[r]$, $S\neq \emptyset$.
\end{itemize}

The multiplicative table of elements in $C$ is given by:
\par\bigskip

$$
\begin{array}{c|cccccccc}
    & C_1 & C_2 & C_3 & C_4 & C_5 & C_6 & C_7 & C_8 \\
 \hline
C_1 & C_1 & C_1 & C_1 & C_1 & C_1 & C_1 & C_1 & C_8 \\
C_2 & C_2 & C_2 & C_2 & C_2 & C_2 & C_2 & C_8 & C_8 \\
C_3 & C_5 & C_4 & C_3 & C_3 & C_3 & C_7 & C_8 & C_8 \\
C_4 & C_5 & C_4 & C_4 & C_4 & C_4 & C_7 & C_8 & C_8 \\
C_5 & C_5 & C_5 & C_5 & C_5 & C_5 & C_5 & C_8 & C_8 \\
C_6 & C_5 & C_4 & C_6 & C_6 & C_3 & C_7 & C_8 & C_8 \\
C_7 & C_5 & C_7 & C_7 & C_7 & C_7 & C_7 & C_8 & C_8 \\
C_8 & C_8 & C_8 & C_8 & C_8 & C_8 & C_8 & C_8 & C_8
\end{array}
$$
\par\bigskip

Following the notation given in~\cite{blw}, those classes containing a primitive element are said to be \emph{primitive classes}, in our case $C_1, C_2$ and $C_6$ and the \emph{accepting classes} are those whose elements verify the desired property, or in our problem, $C_1, C_2, C_3, C_4$ and $C_5$.

Finally, the next result guarantees that $h$ is an homomorphism. The proof is omitted but straightforward.
\begin{theorem}
The function $h\!:\! \Gamma_f\! \to \! C$ defined above is an homomorphism, that is
$$h((T_1,r,f_1)\circ (T_2,r_2,f_2))=h((T_1,r,f_1))\times h((T_2,r_2,f_2))$$
\end{theorem}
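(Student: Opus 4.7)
The plan is to verify the identity $h((T_1,r_1,f_1)\circ(T_2,r_2,f_2))=h((T_1,r_1,f_1))\times h((T_2,r_2,f_2))$ by walking through each cell of the $8\times 8$ multiplicative table, noting first that the class of the composition is measured relative to the new root $r_1$. Almost all of the work can be absorbed by a few uniform observations about the effect of the one new edge $r_1r_2$.

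Since the composition differs from the disjoint union of $T_1$ and $T_2$ only by this edge, all internal distances are preserved and any change of hearing status must arise from a broadcaster crossing the new edge. Because broadcasts have range at most $2$, such cross-coverage is confined to the sets $\{r_1\}\cup N_{T_1}(r_1)$ on one side and $\{r_2\}\cup N_{T_2}(r_2)$ on the other. From this I would extract three principles (with symmetric duals for $r_2$): if $f_1(r_1)=2$ then $r_1$ dominates all of $N_{T_2}[r_2]$ across the edge; if $f_1(r_1)=1$ then $r_1$ dominates only $r_2$; and if $f_1(r_1)=0$ but some $T_1$-neighbor $w$ of $r_1$ has $f_1(w)=2$ (as in class $C_5$), then $w$ now dominates $r_2$ but no further $T_2$-vertex. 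An easy auxiliary fact I would also record is that class $C_7$ forces $f(r)=0$, since a root with value $1$ or $2$ would dominate its whole closed neighborhood and thus leave nothing uncovered in $N[r]$.

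With these tools in hand each cell is determined by only three criteria in the composed tree: whether $r_1$ is dominated; what the maximum value of a neighbor of $r_1$ is (its neighborhood is augmented by exactly the vertex $r_2$, with value $f_2(r_2)$); and, if some vertex remains undominated, whether the set of undominated vertices sits inside $N[r_1]$. Rows $C_1$ and $C_2$ are immediate because $r_1$ rescues the $T_2$-side aggressively; rows $C_3,C_4,C_5$ share $f_1(r_1)=0$ and differ only via the neighbor-value criterion after $r_2$ is added and via the rescue of $r_2$ by the value-$2$ neighbor of $r_1$ that class $C_5$ provides; row $C_6$ is the most delicate case, in which the undominated root $r_1$ becomes dominated precisely when $f_2(r_2)\ge 1$ or when $C_j=C_5$ contributes a value-$2$ vertex at distance $2$ from $r_1$, yielding the pattern $C_5,C_4,C_6,C_6,C_3,C_7,C_8,C_8$; and both row $C_8$ and column $C_8$ are absorbing because the offending uncovered vertex lies at distance at least $3$ from every potential broadcaster on the opposite side.

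The main obstacle is not conceptual but the sheer volume of careful bookkeeping, particularly for the distinction between $C_7$ and $C_8$: a vertex undominated in $N_{T_2}(r_2)$ becomes a distance-$2$ vertex from $r_1$ and therefore leaves $N[r_1]$, which is what produces entries such as $C_2\times C_7=C_8$ and $C_3\times C_7=C_8$. Once each of the $64$ cells has been checked against the three criteria above the homomorphism property follows immediately from the definitions of $\circ$ and $\times$.
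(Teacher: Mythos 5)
Your proposal is correct and is precisely the ``straightforward'' exhaustive verification that the paper itself omits: the reduction to three criteria at the new root (domination status of $r_1$, the maximum $f$-value in its augmented neighborhood, and whether any remaining undominated set lies in $N[r_1]$), together with the observation that cross-coverage over the edge $r_1r_2$ is confined to $N_{T_1}[r_1]$ and $N_{T_2}[r_2]$ and that class $C_7$ forces $f(r)=0$, correctly determines every entry of the table, including the delicate $C_6$ row and the $C_8$-producing cells you single out. Since the paper provides no written proof, your writeup is in fact more detailed than the original; completing it only requires carrying out the remaining routine cell checks.
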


According to Corollary 1 in~\cite{blw}, there exists a linear-time algorithm for computing a minimum dominating $f$-broadcast for a tree $T$. Moreover, it is also possible to construct $f$ in linear time.

\vspace{1.5cc}
\begin{center}
{\bf 4. GENERAL UPPER BOUND FOR THE DOMINATING $2$-BROADCAST NUMBER}
\end{center}

Once we have seen that the computation of the limited version of the broadcast dominating number is not an easy problem, in this section a general upper bound on $\gamma_{\stackrel{}{B_2}}$, in terms of the order of the graph, is given. Having in mind Theorem~\ref{thm:spanning} that allows us to obtain the dominating $2$-broadcast number of any graph as the minimum of the parameter of its spanning trees, we focus on bounding $\gamma_{\stackrel{}{B_2}}$ in trees, being also characterized all trees attaining the bound.

First, we prove a technical lemma about the floor and the ceiling functions that will be used later.
Next, we present a family of trees $\mathcal{F}$, that will play an important role in this section. Concretely, we will obtain that a tight upper bound of $\gamma_{\stackrel{}{B_2}}$ in trees of order $n$ is $\lceil 4n/9 \rceil$ and that, essentially, only reach this bound the trees of $\mathcal{F}$.

\begin{lemma}\label{lem.cotafracciogeneral}
Let $a,b,c,d$ be integers. If $a/b\le c/d$, then $a+ \lceil c(n-b)/d \rceil \le \lceil cn/d \rceil$.

\end{lemma}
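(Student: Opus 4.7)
The plan is to reduce the inequality to two elementary facts about $\lfloor \cdot \rfloor$ and $\lceil \cdot \rceil$: first, that adding an integer commutes with the ceiling, and second, that the ceiling is monotone. Throughout I would assume, as the context of the paper dictates, that $b,d>0$; otherwise the statement $a/b\le c/d$ is the usual one about positive denominators.

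First I would split off an integer part of $cb/d$. Write $cb = qd+r$ with $0\le r<d$, so that $cb/d = q + r/d$ and $q=\lfloor cb/d\rfloor$. The hypothesis $a/b\le c/d$, together with $b,d>0$, gives $a\le cb/d$, and since $a$ is an integer we actually have
\[
a \;\le\; \left\lfloor \frac{cb}{d}\right\rfloor \;=\; q.
\]

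Next I would rewrite the right-hand side of the desired inequality by inserting and removing $cb$:
\[
\left\lceil \frac{cn}{d}\right\rceil
\;=\;\left\lceil \frac{c(n-b)+cb}{d}\right\rceil
\;=\;\left\lceil \frac{c(n-b)}{d}+\frac{r}{d}\right\rceil + q,
\]
where in the last step I used that $q$ is an integer and can be pulled out of the ceiling. Since $r/d\ge 0$, monotonicity of $\lceil\cdot\rceil$ yields
\[
\left\lceil \frac{c(n-b)}{d}+\frac{r}{d}\right\rceil \;\ge\;\left\lceil \frac{c(n-b)}{d}\right\rceil,
\]
and combining this with $q\ge a$ gives $\lceil cn/d\rceil \ge a+\lceil c(n-b)/d\rceil$, as required.

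I do not foresee a real obstacle here; the only subtlety is being careful that the integer one pulls out of the ceiling is exactly $q=\lfloor cb/d\rfloor$ (not $cb/d$ itself, which need not be an integer) and that the hypothesis is used precisely at the step $a\le q$. Everything else is routine manipulation of floor and ceiling functions.
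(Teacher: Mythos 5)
Your proof is correct and follows essentially the same route as the paper's: both arguments reduce the claim to the two facts that $a\le\lfloor bc/d\rfloor$ (since $a$ is an integer with $a\le bc/d$) and that $\lfloor bc/d\rfloor+\lceil c(n-b)/d\rceil\le\lceil cn/d\rceil$. The only cosmetic difference is that you verify the second fact by explicit division with remainder and pulling the integer $q$ out of the ceiling, whereas the paper invokes the general inequality $\lfloor x-y\rfloor\le\lceil x\rceil-\lceil y\rceil$ with $x=cn/d$ and $y=c(n-b)/d$.
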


\begin{proof}
Any pair of real numbers $x$ and $y$ satisfy $\lfloor x-y \rfloor \le \lceil x \rceil - \lceil y \rceil$.
Therefore, $\lfloor bc/d \rfloor =\lfloor cn/d - c(n-b)/d \rfloor \le  \lceil cn/d \rceil - \lceil c(n-b)/d \rceil$, so it is enough to prove that
$a\le \lfloor bc/d \rfloor $. We know that $a$ is an integer such that $a\le  bc/d < \lfloor bc/d \rfloor +1$. Hence, $a\le \lfloor bc/d \rfloor$.
\end{proof}

\begin{figure}[h]
\begin{center}
\includegraphics [width=0.35\textwidth]{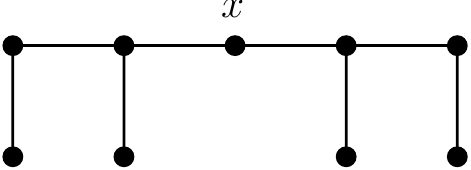}
\caption{$T_9$ has 9 vertices and $\gamma_{\stackrel{}{B_2}} (T_9)=4$. Its unique central vertex is $x$.}
\label{fig:CotaB2Tight nine vertices}
\end{center}
\end{figure}

Let $T_9$ be the tree shown in Figure~\ref{fig:CotaB2Tight nine vertices}, with central vertex $x$. Consider the family $\mathcal{F}$ of trees consisting on $m\geq 1$ copies of $T_9$ in addition to $m-1$ edges, any of them joining central vertices of two copies. Every tree in $\mathcal{F}$ has $9m$ vertices.

\begin{proposition}\label{lem:extremalnueve}
let $T$ be a tree with $n$ vertices such that $T\in \mathcal{F}\cup \{ P_1, P_2, P_4\}$. Then $\gamma_{\stackrel{}{B_2}}(T)=\lceil 4n/9 \rceil$.
\end{proposition}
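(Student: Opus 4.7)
The plan is to split the argument into two parts: the three small trees $P_1,P_2,P_4$, which can be handled by direct inspection, and the members of $\mathcal{F}$, which I would attack by induction on the number $m$ of copies of $T_9$ that constitute $T$.

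For the small cases, one checks that $\lceil 4n/9\rceil$ equals $1,1,2$ for $n=1,2,4$. In $P_1$ and $P_2$ every dominating $2$-broadcast has cost at least $1$ because some vertex must receive a positive value, and placing a $1$ on the unique vertex (respectively, on either vertex) attains this. In $P_4$, which has diameter $3$, no $2$-broadcast of cost $1$ can dominate the whole path, while placing a $2$ on an internal vertex produces an optimal dominating $2$-broadcast of cost~$2$.

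For $\mathcal{F}$, I would prove $\gamma_{\stackrel{}{B_2}}(T)=4m$ whenever $T\in\mathcal{F}$ is built from $m\ge 1$ copies of $T_9$, by induction on $m$. The base case $m=1$ is the claim $\gamma_{\stackrel{}{B_2}}(T_9)=4$. The upper bound is witnessed by the explicit $2$-broadcast shown in Figure~\ref{fig:CotaB2Tight nine vertices}. The lower bound $\gamma_{\stackrel{}{B_2}}(T_9)\ge 4$ is shown by a small case analysis: using Proposition~\ref{prop:general}(2) one may restrict to $2$-broadcasts giving value $0$ to every leaf, and then the finitely many configurations of cost at most $3$ (at most one vertex with value $2$, or up to three vertices with value $1$) can be ruled out one by one with the explicit structure of $T_9$ and its central vertex $x$.

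For the inductive step with $m\ge 2$, pick a copy $T_9^{(i)}$ whose central vertex $x_i$ is a leaf in the contracted tree obtained by collapsing each copy to a point, and let $e$ be the edge joining $x_i$ to the central vertex of an adjacent copy. Then $T-e$ has components $T_9^{(i)}$ and a tree $T'\in\mathcal{F}$ with $m-1$ copies, so Proposition~\ref{prop:general}(1) and the inductive hypothesis give
\[
\gamma_{\stackrel{}{B_2}}(T)\le \gamma_{\stackrel{}{B_2}}(T_9^{(i)})+\gamma_{\stackrel{}{B_2}}(T')=4+4(m-1)=4m.
\]
For the matching lower bound, let $f$ be an optimal dominating $2$-broadcast on $T$. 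The key geometric observation is that, since the copies are glued only at their central vertices, any vertex $v\in V(T_9^{(i)})$ with $d(v,x_i)\ge 2$ is at distance at least $3$ from every vertex outside $V(T_9^{(i)})$, hence can only be heard from inside $T_9^{(i)}$. Consequently the restriction $f_i=f|_{V(T_9^{(i)})}$ dominates all of $T_9^{(i)}$ except perhaps some vertices in $N[x_i]$; those remaining vertices can only be covered from outside by a value-$2$ broadcast placed at the neighboring central vertex. A charging argument, redistributing such shared help between adjacent copies and using the base case $\gamma_{\stackrel{}{B_2}}(T_9)=4$, shows $\sum_i \omega(f_i)\ge 4m$, hence $\omega(f)\ge 4m$.

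The main obstacle is exactly this last step: a vertex in $N(x_i)$ may genuinely be dominated for free by a value-$2$ broadcast sitting at the central vertex of the adjacent copy, so one cannot immediately declare $\omega(f_i)\ge 4$ copy by copy. I expect to handle this by a pairing/exchange argument along the edges of the contracted tree, showing that whenever one copy enjoys such external help the neighboring copy must pay a compensating surplus, so that the total cost is still at least $4m$.
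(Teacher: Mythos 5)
Your overall architecture --- direct inspection for $P_1,P_2,P_4$, an explicit cost-$4m$ upper bound, and a copy-by-copy lower bound using the fact that the copies of $T_9$ are glued only at their central vertices --- is the same as the paper's. The base case $\gamma_{\stackrel{}{B_2}}(T_9)=4$ and the upper bound are fine. The problem is the step you yourself label ``the main obstacle'': the lower bound $\omega(f)\ge 4m$ for $m\ge 2$. You do not actually give the pairing/exchange argument, you only announce that you expect one to exist, so as written the proof is incomplete at its only nontrivial point. Moreover the premise of that planned argument is slightly off: a vertex of $N[x_i]$ can also receive external help from a value-$1$ broadcast at the adjacent central vertex $x_j$ (for $x_i$ itself), or from a value-$2$ broadcast at a support vertex of the adjacent copy, not only from a value-$2$ broadcast at $x_j$; so the bookkeeping you would have to do is messier than you suggest.

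The gap closes without any charging argument if you restrict attention to the \emph{leaves} of each copy rather than to ``all of $T_9^{(i)}$ except $N[x_i]$''. Every leaf of $T_9$ is at distance at least $2$ from the central vertex, hence at distance at least $3$ from every vertex outside its own copy, so each of the four leaves of $T_9^{(i)}$ must hear a vertex of $T_9^{(i)}$ itself. These four leaves are pairwise at distance at least $3$ in $T_9$, so a vertex with value $1$ covers at most one of them, and a vertex with value $2$ covers at most two (any three of the leaves contain a pair at distance greater than $4$). Consequently no assignment of total cost $3$ inside a copy can reach all four of its leaves, and $\omega\bigl(f|_{V(T_9^{(i)})}\bigr)\ge 4$ for every $i$; summing over the $m$ disjoint copies gives $\omega(f)\ge 4m$ directly, with no induction on $m$ and no compensation between adjacent copies. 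This is essentially the paper's argument (the paper states it as: the leaves of a copy hear no vertex of another copy, hence the restriction of $f$ to the copy costs at least $4$; strictly speaking the restriction need not dominate the central vertex $x_i$, which can be dominated from the adjacent copy, but the cost bound is all that is used).
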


\begin{proof}
It is straightforward to check that paths $P_1, P_2$ and  $P_4$ satisfies the desired equality. If $T\in \mathcal{F}$, then $T$ has $n=9m$ vertices for some $m\geq 1$. If $m=1$, then $T=T_9$ and the statement is true. Assume that $m\geq 2$ and note that $T$ has $4m$ support vertices that are an optimal dominating set. So, $\gamma_{\stackrel{}{B_2}}(T)\le \gamma (T)=4m$.

Let $f$ be an optimal dominating $2$-broadcast of $T$. Observe that the leaves of a copy of $T_9$ are at distance at least two from the central vertex of the copy, so they do not hear any vertex of another copy. Therefore, $f\vert _{T_9}$, the restriction of $f$ to any copy of $T_9$, is a dominating $2$-broadcast of $T_9$ and $\omega (f\vert _{T_9})\geq 4$. Finally, $4m\le \omega (f)=\gamma_{\stackrel{}{B_2}}(T)\le \lceil 4n/9 \rceil=4m$.
\end{proof}

Next, we obtain a general tight upper bound for trees, being also characterized the family of trees attaining it.

\begin{theorem}\label{pro.cotaB2}
Let $T$ be a tree with $n$ vertices. Then, $\gamma_{\stackrel{}{B_2}} (T)\le \lceil 4n/9 \rceil$.
Moreover, $\gamma_{\stackrel{}{B_2}} (T)=\lceil 4n/9 \rceil$ if and only if $T\in \mathcal{F}\cup \{ P_1, P_2, P_4\}$.

\end{theorem}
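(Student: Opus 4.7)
The plan is to prove both parts by strong induction on $n$, with Lemma~\ref{lem.cotafracciogeneral} as the arithmetic tool. Since the ``if'' direction of the characterization is already handled by Proposition~\ref{lem:extremalnueve}, the real work is the upper bound together with the ``only if'' direction of the equality case. For $n\le 9$ I would handle the base cases by direct inspection: Proposition~\ref{prop:general}(2) restricts attention to broadcasts that vanish on every leaf, and the bound $\gamma_{\stackrel{}{B_2}}(T)\le\gamma(T)$ reduces the check to a short finite list, from which one reads off that only $P_1, P_2, P_4$ and $T_9$ attain $\lceil 4n/9\rceil$.

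For $n\ge 10$, root $T$ at an endpoint $r$ of a longest path, let $v_0$ be a leaf at maximum distance from $r$, and consider the ascending path $v_0v_1v_2v_3\dots$ towards $r$. The goal is to extract a subset $S\subseteq V(T)$ containing $v_0$ such that $T[S]$ is connected and joined to $T-S$ by a single (cut-)edge, such that $T[S]$ admits a dominating $2$-broadcast of cost $a$, and such that $a/|S|\le 4/9$. Given such an $S$, combining Proposition~\ref{prop:general}(1), the induction hypothesis applied to $T-S$, and Lemma~\ref{lem.cotafracciogeneral} yields
$$\gamma_{\stackrel{}{B_2}}(T)\le a+\gamma_{\stackrel{}{B_2}}(T-S)\le a+\lceil 4(n-|S|)/9\rceil\le\lceil 4n/9\rceil.$$
The subset $S$ is produced by a case analysis on the branches attached to $v_1,v_2,v_3$, preferring the most efficient reductions first: a support vertex with $\ge 2$ pendant leaves ($a=1$, $|S|\ge 3$, ratio $\le 1/3$); a local cluster of $\ge 5$ vertices coverable by a single broadcast of value $2$ (ratio $\le 2/5$); a block of $\ge 7$ vertices that requires cost $3$ (ratio $\le 3/7$); and, when none of these applies, a copy of $T_9$ dangling via a cut-edge at its central vertex ($a=4$, $|S|=9$, ratio exactly $4/9$).

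For the ``only if'' direction, assume $\gamma_{\stackrel{}{B_2}}(T)=\lceil 4n/9\rceil$ with $T\notin\{P_1,P_2,P_4\}$. All three inequalities in the displayed chain must then be tight. Lemma~\ref{lem.cotafracciogeneral} is tight only when $a/|S|=4/9$, forcing $a=4$ and $|S|=9$; by the structure of the case analysis, this in turn forces $T[S]$ to be a copy of $T_9$ attached to $T-S$ through a cut-edge incident to the central vertex $x$ of $T_9$ (any other attachment would allow external help across the cut-edge to cover part of $T[S]$ for free, breaking tightness in Proposition~\ref{prop:general}(1)). Tightness of the remaining inequality gives $\gamma_{\stackrel{}{B_2}}(T-S)=\lceil 4(n-9)/9\rceil$, so $T-S$ is extremal of smaller order and by induction $T-S\in\mathcal{F}\cup\{P_1,P_2,P_4\}$. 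A direct check rules out $T-S\in\{P_1,P_2,P_4\}$ (in each of these cases the tree obtained by gluing $T_9$ at $x$ to such a small extremal tree has $\gamma_{\stackrel{}{B_2}}$ strictly less than $\lceil 4n/9\rceil$, because the pendant neighbours of $x$ are freely covered by the $T_9$-broadcasts), so $T-S\in\mathcal{F}$, the attachment on the $T-S$ side must itself be at a central vertex of one of its $T_9$-copies, and hence $T\in\mathcal{F}$.

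The main obstacle will be the case analysis that produces $S$: one has to argue exhaustively that when no reduction of ratio strictly less than $4/9$ is available in the bottom portion of $T$, the local structure within radius $3$ of $v_0$ is forced to realise precisely a copy of $T_9$ attached through a cut-edge. This requires a careful enumeration of branch shapes at $v_1,v_2,v_3$, combined with Proposition~\ref{prop:general}(2) to discard broadcasts that place positive value on leaves, and a final check that in the tight case the cut-edge must be incident to the central vertex $x$ of the extracted $T_9$.
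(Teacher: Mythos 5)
Your overall strategy --- strong induction, peeling off a connected piece $S$ near a deepest leaf through a single cut-edge, bounding via Proposition~\ref{prop:general}(1), the induction hypothesis and Lemma~\ref{lem.cotafracciogeneral}, and then analysing tightness --- is exactly the paper's proof, including essentially the same menu of reductions with ratios $1/3$, $2/5$, $3/7$ and $4/9$. However, there is a genuine gap in your ``only if'' argument. The claim that Lemma~\ref{lem.cotafracciogeneral} ``is tight only when $a/|S|=4/9$'' is false: because of the ceilings, the inequality $a+\lceil 4(n-|S|)/9\rceil\le\lceil 4n/9\rceil$ can be an equality with $a/|S|$ strictly below $4/9$. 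For instance $a=3$, $|S|=7$, $n-|S|=4$, $n=11$ gives $3+\lceil 16/9\rceil=5=\lceil 44/9\rceil$, and here the middle inequality of your chain is also tight, since $P_4$ is extremal. So you cannot conclude $a=4$ and $|S|=9$ at that point, and your plan never analyses the configurations in which a reduction of ratio strictly less than $4/9$ is glued across the cut-edge to a small extremal tree $P_1$, $P_2$ or $P_4$ --- which is precisely where the paper's proof spends most of its effort.

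The repair (and the paper's actual argument) runs in the other order: tightness of the middle inequality first forces $T-S\in\mathcal{F}\cup\{P_1,P_2,P_4\}$ by induction. If $T-S\in\mathcal{F}$, then $n-|S|=9m$ and tightness of the outer inequality gives $a=\lceil 4|S|/9\rceil\le 4|S|/9$, which indeed forces $a/|S|=4/9$ exactly and hence the $T_9$ case. But if $T-S\in\{P_1,P_2,P_4\}$, the whole chain can be tight for every reduction type, and one must exhibit, case by case, an explicit dominating $2$-broadcast of $T$ of cost strictly less than $\lceil 4n/9\rceil$; the point each time is that a vertex of value $2$ near the cut-edge also covers part of the small component for free, so the additivity in Proposition~\ref{prop:general}(1) is wasteful. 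These explicit constructions, together with the analogous one showing that when $T-S\in\mathcal{F}$ the cut-edge must meet the central vertex of the adjacent $T_9$-copy, are the content your sketch is missing; without them the ``only if'' direction does not close.
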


\proof
We proceed by induction on the order $n$ of $T$. It is straightforward to check that both statements are true if $1\le |V(T)|\le 4$. Assume that $T$ is a tree of order $n\ge 5$ and that statements are true for all trees of order less than $n$.

If $T$ is  a path, then $\gamma_{\stackrel{}{B_2}}(T)\le \gamma(T)=\lceil n/3 \rceil \le \lceil 4n/9 \rceil.$ On the other hand, $\lceil n/3\rceil=\lceil 4n/9 \rceil$ if and only if $n=1,2,4$. Therefore, there are no paths with at least five vertices satisfying $\gamma_{\stackrel{}{B_2}}(T)\le \gamma(P_n)= \lceil 4n/9 \rceil$. Now, assume that $T$ is not a path.
Let $u,u'$ be antipodal vertices in $T$
and let $u,u_1,\dots ,u_{D-1},u'$ be a shortest path from $u$ to $u'$, where $D=diam (T)$.

Let $v$ be the vertex of degree $d \geq 3$ closer to $u$ lying on this path.
In the following cases, we define trees $T_1$ and $T_2$ by removing a cutting edge of $T$, with orders $n_1$ and $n_2$, respectively. So, by Proposition~\ref{prop:general} and inductive hypothesis, $\gamma_{\stackrel{}{B_2}} (T)\le \gamma_{\stackrel{}{B_2}} (T_1) + \gamma_{\stackrel{}{B_2}} (T_2) \le \lceil 4 n_1 /9 \rceil + \lceil 4 n_2 /9 \rceil$.

\begin{enumerate}

\item
$v=u_1$. Consider $T_1=T(u,u_1u_2)$ and $T_2=T(u',u_1u_2)$ with $n_1\geq 3$ and $n_2\leq n-3$. Applying Lemma~\ref{lem.cotafracciogeneral}, we obtain:
\begin{equation}\label{eq:case1}
\gamma_{\stackrel{}{B_2}} (T)\le 1  + \lceil 4 n_2 /9 \rceil\leq  1  + \lceil 4 (n-3) /9 \rceil \le \lceil 4 n /9 \rceil
 \end{equation}

In this case, there is no tree reaching the bound.
Suppose, on the contrary, that $\gamma_{\stackrel{}{B_2}} (T)=\lceil 4n/9 \rceil$. Then, every inequality in Equation~\ref{eq:case1} must be an equality and $\gamma_{\stackrel{}{B_2}} (T_2)=\lceil 4 n_2 /9 \rceil$. Then, by the inductive hypothesis $T_2 \in \mathcal{F}\cup \{ P_1, P_2, P_4\}$. We study each of the possibilities.

If $T_2\in \mathcal{F}$ then, $n_2=9m$ for some $m\geq 1$ and the terms of Equation~\ref{eq:case1} become:

\begin{eqnarray*}
1 + \lceil 4 n_2 /9 \rceil &=& 1 + \lceil 4 (9m) /9 \rceil =4m+1 \\
   \lceil 4 n /9 \rceil & \geq & 4 (n_2+3) /9 \rceil =\lceil 4 (9m+3) /9 \rceil =4m + \lceil 12/9 \rceil =4m+2.
\end{eqnarray*}

If $T_2=P_4$, then $n_2=4$ and  $1 + \lceil 4 n_2 /9 \rceil =1 + \lceil 16/9 \rceil =3, \lceil 4 n /9 \rceil \geq \lceil 4 (4+3) /9 \rceil =4$.

If $T_2=P_2$, then $n_2=2$ and $1 + \lceil 4 n_2 /9 \rceil =1 + \lceil 8/9 \rceil =2, \lceil 4 n /9 \rceil \geq \lceil 4 (2+3) /9 \rceil = 3$.

If $T_2=P_1$, then $n_2=1$ and $1 + \lceil 4 n_2 /9 \rceil =1 + \lceil 4/9 \rceil =2$. So $\lceil 4 n /9 \rceil =2$ and therefore $4\geq n=n_1+1$. This implies that $n_1=3$, $n=4$ and $T$ is a star, then $\gamma_{\stackrel{}{B_2}} (T)=1$.

In all cases we obtain a contradiction.

\item
$v=u_k, k\geq 3$. Repeat the same reasoning with $T_1=T(u,u_2u_3)$ and $T_2=T(u',u_2u_3)$, having $n_1=3$ and $n_2=n-3$ vertices, respectively.
In this case, there is no tree reaching the bound.

\item $v=u_2$. Consider $T_1=T(u,u_2u_3)$  and $T_2=T(u',u_2u_3)$.

\begin{enumerate}
\item  If $n_1\geq 5$, then $n_2\leq n-5$, $ \gamma_{\stackrel{}{B_2}} (T_1)=2$ and proceeding as before we obtain:
\begin{equation}\label{eq:case3}
\gamma_{\stackrel{}{B_2}} (T)\le  2  + \lceil 4 n_2 /9 \rceil \le  2  + \lceil 4 (n-5) /9 \rceil \le \lceil 4 n /9 \rceil
\end{equation}

In this case, there is also no tree reaching the bound.
Suppose, on the contrary, that $\gamma_{\stackrel{}{B_2}} (T)=\lceil 4n/9 \rceil$. Then, every inequality in Equation~\ref{eq:case3} must be an equality and $\gamma_{\stackrel{}{B_2}} (T_2)=\lceil 4 n_2 /9 \rceil$. Then, by the inductive hypothesis, $T_2 \in \mathcal{F}\cup \{ P_1, P_2, P_4\}$.
If $T_2\in \mathcal{F}$, then $n_2=9m$ for some $m\geq 1$ and
\begin{eqnarray*}
 2 + \lceil 4 n_2 /9 \rceil &\!\!\!\!\! = \!\!\!\!\!&2 + \lceil 4 (9m) /9 \rceil =4m+2 \\
\lceil 4 n /9 \rceil &\!\!\!\!\! \geq \!\!\!\!\!&\lceil 4 (n_2+5) /9 \rceil =\lceil 4 (9m+5) /9 \rceil =4m + \lceil 20/9 \rceil =4m+3.
 \end{eqnarray*}

If $T_2=P_k, k=1,2$, then the function $f\colon V(T)\to \{0,1,2\}$ satisfying $f(u_2)=2$ and $f(w)=0$ otherwise, is an optimal dominating $2$-broadcast of $T$ and $\gamma_{\stackrel{}{B_2}} (T)= 2$. On the other hand, $n\geq 6$. So, $\lceil 4 n /9 \rceil \geq \lceil 24/9 \rceil =3$.\\
In the same way, if $T_2=P_4$, then the function $f\colon V(T)\to \{0,1,2\}$ satisfying $f(u_2)=2$, $f(u')=1$ and $f(x)=0$ otherwise, is an optimal dominating $2$-broadcast of $T$ and $\gamma_{\stackrel{}{B_2}} (T)=3$. In this case, $n\geq 9$ so $\lceil 4 n /9 \rceil \geq \lceil 36/9 \rceil =4$.

In all the previous cases we obtain a contradiction.

 \item If $n_1=4$, then there is a unique leaf $w$ hanging from $v$. Consider $T'=T(u,u_3u_4)$ and $T''=T(u',u_3u_4)$, having $n'\geq 5$ and $n''\leq n-5$ vertices, respectively, and the set $S=V(T')\setminus \{ u,u_1,v,w \}$. Observe that every vertex of $S$ are at distance at most 3 from $u_3$. We distinguish the following cases.

\begin{enumerate}%[i)]
\item \emph{$S=\{u_3\}$}.
  In such a case, $n'=5, n''=n-5$ and the function $f$ satisfying $f(v)=2$ and $f(x)=0$ otherwise, is
  a dominating $2$-broadcast function on $T'$. Proceeding as in the previous cases, we obtain:
  \begin{equation}\label{eq:case4i}
  \gamma_{\stackrel{}{B_2}} (T)\le \gamma_{\stackrel{}{B_2}} (T') + \gamma_{\stackrel{}{B_2}} (T'')\le
 2 + \lceil 4 (n-5) /9 \rceil \le \lceil 4 n /9 \rceil
 \end{equation}

If $\gamma_{\stackrel{}{B_2}} (T)=\lceil 4n/9 \rceil$, then every inequality in Equation~\ref{eq:case4i} must be an equality and $\gamma_{\stackrel{}{B_2}} (T'')=\lceil 4 n'' /9 \rceil$. Hence, by the inductive hypothesis $T'' \in \mathcal{F}\cup \{ P_1, P_2, P_4\}$.

If $T''\in \mathcal{F}$, then $n''=9m$ for some $m\geq 1$ and
\begin{eqnarray*}
 2 + \lceil 4 n'' /9 \rceil &\!\!\!\!\!=\!\!\!\!\!& 2 + \lceil 4 (9m) /9 \rceil =4m+2\\
  \lceil 4 n /9 \rceil &\!\!\!\!\!\geq \!\!\!\!\!& \lceil 4 (n''+5) /9 \rceil =\lceil 4 (9m+5) /9 \rceil =4m+3.
  \end{eqnarray*}
If $T''=P_1$, then the function $f\colon V(T)\to \{0,1,2\}$ satisfying $f(u_2)=2$ and $f(x)=0$ otherwise, is an optimal dominating $2$-broadcast of $T$ and $\gamma_{\stackrel{}{B_2}} (T)= 2$. On the other hand, $n\geq 6$ so $\lceil 4 n /9 \rceil \geq \lceil 24/9 \rceil =3$.\\
If $T''=P_2$, then $2 + \lceil 4 n'' /9 \rceil =2 + \lceil 8 /9 \rceil =3$ and $\lceil 4 n /9 \rceil \geq \lceil  4(5+2)/9 \rceil =4$.\\
In the same way, if $T''=P_4$ and $u_4$ is a leaf of $T''$, then the function $f\colon V(T)\to \{0,1,2\}$ satisfying $f(u_2)=2$, $f(u_{D-1})=1$ and $f(x)=0$ otherwise, is an optimal dominating $2$-broadcast of $T$ and $\gamma_{\stackrel{}{B_2}} (T)=3$. In this case, $n=9$ and thus $\lceil 4 n /9 \rceil = \lceil 36/9 \rceil =4$. In all the previous cases we obtain a contradiction, so $\gamma_{\stackrel{}{B_2}} (T)<\lceil 4n/9 \rceil$.

However, if $T''=P_4$ and $u_4$ is not a leaf of $T''$, then $T=T_9$ and therefore $T\in \mathcal{F}$.

\item
\emph{There is at least one vertex in $S$ at distance 1 from $u_3$, but there are no vertices at distance 2.}
  In such a case, $n'\geq 6, n''\leq n-6$ and the function $f$ satisfying $f(v)=2$ and $f(x)=0$ otherwise, is
  a dominating $2$-broadcast function on $T'$.
  Hence, we obtain:
  \begin{equation}\label{eq:case4ii}
  \gamma_{\stackrel{}{B_2}} (T)\le 2 + \lceil 4 n'' /9 \rceil \le  2 + \lceil 4 (n-6) /9 \rceil \le \lceil 4 n /9 \rceil
 \end{equation}

Suppose that $\gamma_{\stackrel{}{B_2}} (T)=\lceil 4n/9 \rceil$. Hence every inequality in Equation~\ref{eq:case4ii} must be an equality and $\gamma_{\stackrel{}{B_2}} (T'')=\lceil 4 n'' /9 \rceil$. So, by the inductive hypothesis $T'' \in \mathcal{F}\cup \{ P_1, P_2, P_4\}$.

If $T''\in \mathcal{F}$, then $n''=9m$ for some $m\geq 1$ and
\begin{eqnarray*}
  2 + \lceil 4 n'' /9 \rceil &=&2 + \lceil 4 (9m) /9 \rceil =4m+2\\
\lceil 4 n /9 \rceil &\geq& \lceil 4 (n''+6) /9 \rceil =\lceil 4 (9m+6) /9 \rceil =4m+3.
\end{eqnarray*}

If $T''=P_1$, then $2 + \lceil 4 n'' /9 \rceil =2 + \lceil 4 /9 \rceil =3$ and $\lceil 4 n /9 \rceil \geq \lceil  4(6+1)/9 \rceil =4$.\\
If $T''=P_2$, then $2 + \lceil 4 n'' /9 \rceil =2 + \lceil 8 /9 \rceil =3$ and $\lceil 4 n /9 \rceil \geq \lceil  4(6+2)/9 \rceil =4$.\\
If $T''=P_4$, then $2 + \lceil 4 n'' /9 \rceil =2 + \lceil 16 /9 \rceil =4$ and $\lceil 4 n /9 \rceil \geq \lceil  4(6+4)/9 \rceil =5$.\\
In all cases we obtain a contradiction, so $\gamma_{\stackrel{}{B_2}} (T)<\lceil 4n/9 \rceil$.

  \item
  \emph{There is at least one vertex in $S$ at distance 2 from $u_3$, but there are no vertices at distance 3.}
  Thus, $n'\geq 7$, $n''\leq n-7$ and the function satisfying $f(u)=1$, $f(u_3)=2$ and $f(x)=0$ otherwise, is a dominating $2$-broadcast on $T'$. We obtain:

\begin{equation}\label{eq:case4iii}
\gamma_{\stackrel{}{B_2}} (T)\le  3 + \lceil 4 (n-7) /9 \rceil \le \lceil 4 n /9 \rceil
 \end{equation}

Suppose that $\gamma_{\stackrel{}{B_2}} (T)=\lceil 4n/9 \rceil$. Hence,  every inequality in Equation~\ref{eq:case4iii} must be an equality and $\gamma_{\stackrel{}{B_2}} (T'')=\lceil 4 n'' /9 \rceil$. Thus, by the inductive hypothesis $T'' \in \mathcal{F}\cup \{ P_1, P_2, P_4\}$.

If $T''\in \mathcal{F}$, then $n''=9m$ for some $m\geq 1$ and
\begin{eqnarray*}
  3 + \lceil 4 n'' /9 \rceil &=& 3 + \lceil 4 (9m) /9 \rceil =4m+3\\
  \lceil 4 n /9 \rceil &\geq&  \lceil 4 (n''+7) /9 \rceil =\lceil 4 (9m+7) /9 \rceil =4m+4.
\end{eqnarray*}
If $H=P_k, k=1,2$, then the function $f\colon V(T)\to \{0,1,2\}$ satisfying $f(u_1)=1, f(u_3)=2$ and $f(x)=0$ otherwise, is an optimal dominating $2$-broadcast of $T$ and $\gamma_{\stackrel{}{B_2}} (T)=3$. On the other hand, $n\geq 8$ so $\lceil 4 n /9 \rceil \geq \lceil 32/9 \rceil =4$.\\
In the same way, if $T=P_4$, then the function $f\colon V(T)\to \{0,1,2\}$ satisfying $f(u_1)=1$, $f(u_3)=2, f(u')=1$ and $f(x)=0$ otherwise, is an optimal dominating $2$-broadcast of $T$ and $\gamma_{\stackrel{}{B_2}} (T)=4$. In this case, $n\geq 11$ so $\lceil 4 n /9 \rceil \geq \lceil 44/9 \rceil =5$.

In all cases we obtain a contradiction, so $\gamma_{\stackrel{}{B_2}} (T)<\lceil 4n/9 \rceil$.
 \item
 \emph{There is at least one vertex in $S$ at distance 3 from $u_3$.}
  Consider a vertex $t$ at distance 3 from $u_3$ and let $u_3,t_1,t_2,t$ be the $(u_3,t)$-path in $T'$.
  Consider the connected component $C_t=T'(t,u_3t_1)$ of $T_1$.
  We may assume that $C_t$ has exactly 4 vertices,
  as otherwise we can proceed as in the preceding cases because  $t$ and $u'$ are antipodal and we are done.
Therefore, it only remains to prove that the result holds when, for every vertex $t$ of $S$ at distance 3 from $u_3$,  $C_t$ has exactly 4 vertices. In such a case, the tree induced by $C_t$ must be a path $t',t_1,t_2,t$.
Consider the function $f$ defined on $V(T')$ such that $f(u_3)=2$,  $f(u)=1$, $f(t)=1$, if $d(t,u_3)=3$, and $f(x)=0$ otherwise.
On one hand, $f$ is a dominating 2-broadcast on $T'$ with cost $r+3$, where $r$ is the number of vertices in $S$ at distance 3 from $u_3$.
On the other hand, $T'$ has at least $4r+5$ vertices (see Figure~\ref{fig:CotaB2Caseiii}).
\begin{figure}[h]
\begin{center}
\includegraphics [width=0.35\textwidth]{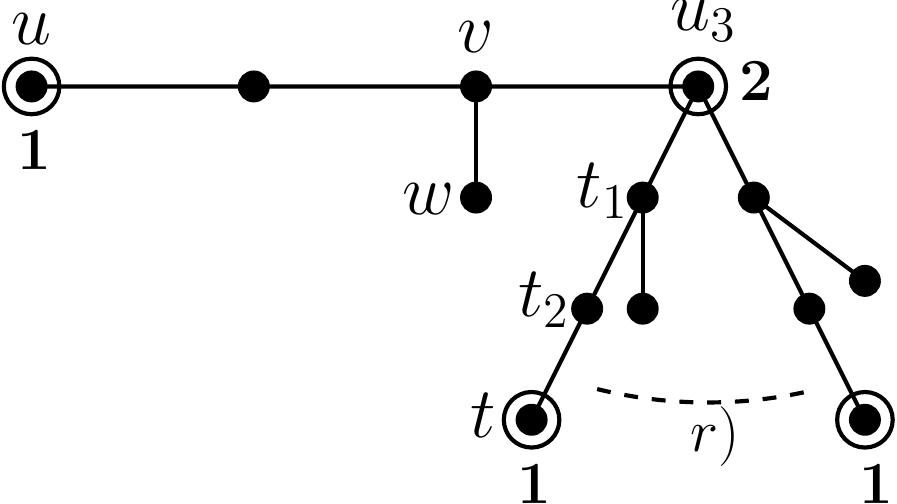}
\caption{$T'$ satisfies $n\ge 4r+5$ and $\gamma_{\stackrel{}{B_2}} (T')\le 3+r$.}
\label{fig:CotaB2Caseiii}
\end{center}
\end{figure}

It is straightforward to check that $r\ge 1$ implies that $\displaystyle{\frac{r+3}{4r+5}\le \frac 49}$.
Therefore, we have:
\begin{equation}\label{eq:case4iv}
\gamma_{\stackrel{}{B_2}} (T)\le  (r+3) + \lceil 4 (n-(4r+5)) /9 \rceil \le \lceil 4 n /9 \rceil
 \end{equation}

Suppose that $\gamma_{\stackrel{}{B_2}} (T)=\lceil 4n/9 \rceil$, then every inequality in Equation~\ref{eq:case4iv} must be an equality and $\gamma_{\stackrel{}{B_2}} (T'')=\lceil 4 n'' /9 \rceil$. By the inductive hypothesis, $T'' \in \mathcal{F}\cup \{ P_1, P_2, P_4\}$.

If $T''=P_k, k=1,2$, then the function $f$ defined on $V(T)$ such that $f(u_3)=2$, $f(u)=1$, $f(t)=1$, if $d(t,u_3)=3$, and $f(x)=0$ otherwise is a dominating $2$-broadcast on $T$ with cost $r+3$, thus $\gamma_{\stackrel{}{B_2}} (T)\leq r+3$. However, $\gamma_{\stackrel{}{B_2}} (T') + \gamma_{\stackrel{}{B_2}} (T'')=(r+3)+1=r+4$.\\
Similarly, if  $T''=P_4$, then the function $f\colon V(T)\to \{0,1,2\}$ satisfying $f(u_3)=2$, $f(u)=1$, $f(t)=1$, if $d(t,u_3)=3$, $f(u')=1$ and $f(x)=0$ otherwise, is a dominating $2$-broadcast on $T$ with cost $r+4$, thus $\gamma_{\stackrel{}{B_2}} (T)\leq r+4$. However, $\gamma_{\stackrel{}{B_2}} (T') + \gamma_{\stackrel{}{B_2}} (T'')=(r+3)+2=r+5$.\\
In each case we obtain a contradiction, so $\gamma_{\stackrel{}{B_2}} (T)<\lceil 4n/9 \rceil$.

Finally, if $T''\in \mathcal{F}$, then $n''=9m$ for some $m\geq 1$ and
$$(r+3) + \lceil 4n'' /9 \rceil =(r+3) + \lceil 4 (9m) /9 \rceil =4m+(r+3)$$
$$\lceil 4 n /9 \rceil \geq \lceil 4 (9m+(4r+5)) /9 \rceil =4m + \lceil (16r+20)/9 \rceil$$
Therefore, $4m+(r+3)=4m + \lceil (16r+20)/9 \rceil$. Thus, $r+3=\lceil (16r+20)/9 \rceil=\lceil (9r+18)/9 + (7r+2)/9\rceil= r+2+\lceil(7r+2)/9\rceil$, so $1=\lceil(7r+2)/9\rceil$ or equivalently $r=1$. Note that this means that $T'=T_9$ and $u_3$ is its center. So, in order to prove that $T\in \mathcal{F}$ we just need to ensure that $u_4$ is the central vertex of its copy of $T_9$.

Let $H_9$ be the copy of $T_9$ in $T''$ containing $u_4$ and assume on the contrary that $u_4$ is not the central vertex of $H_9$. If $u_4$ is a leaf, then we denote by $z$ its support vertex. If $u_4$ is a support vertex, then let $z=u_4$. Define the function $f\colon V(T)\to \{0,1,2\}$ in the following way: $f(u_3)=2$, $f(u)=1$, $f(t)=1$ if $t\in V(T')$ and $d(t,u_3)=3$, $f(y)=1$ if $y$ is a support vertex of $T''$ other than $z$, and $f(x)=0$ otherwise. Thus, $f$ is a dominating $2$-broadcast of $T$ with cost $4m-1+4=4(m+1)-1=(4(9m+9)/9)-1=4n/9-1$. A contradiction with $\gamma_{\stackrel{}{B_2}} (T)=\lceil 4n/9 \rceil$. \qed
\end{enumerate}
\end{enumerate}
\end{enumerate}

As a consequence of Theorem \ref{thm:spanning} and Theorem \ref{pro.cotaB2} we conclude the following result, that extend the upper bound that we found for trees to any graph.

\begin{corollary}\label{cor:general graphs}
Let $G$ be a graph of order $n$, then $$\gamma_{\stackrel{}{B_2}} (G)\le \lceil 4n/9 \rceil .$$
\end{corollary}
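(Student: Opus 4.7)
The plan is to combine the two main results already established: Theorem \ref{thm:spanning}, which expresses the dominating $2$-broadcast number of a graph as the minimum of the parameter over all its spanning trees, and Theorem \ref{pro.cotaB2}, which gives the bound $\lceil 4n/9 \rceil$ for any tree of order $n$. Since $G$ is connected, it has at least one spanning tree $T$, and $T$ has the same order $n$ as $G$.

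First I would fix any spanning tree $T$ of $G$. Applying Theorem \ref{pro.cotaB2} to $T$ yields $\gamma_{\stackrel{}{B_2}}(T) \le \lceil 4n/9 \rceil$. Then, invoking Theorem \ref{thm:spanning}, we have $\gamma_{\stackrel{}{B_2}}(G) \le \gamma_{\stackrel{}{B_2}}(T)$, since $\gamma_{\stackrel{}{B_2}}(G)$ is defined as the minimum of $\gamma_{\stackrel{}{B_2}}$ over all spanning trees. Chaining the two inequalities gives the desired bound.

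There is no real obstacle here; the statement is an immediate corollary, and the only thing to verify is that both previous results apply in the required generality (any connected graph admits a spanning tree, and the tree bound holds for every order $n \ge 1$). The brevity of the proof is the entire point: the substantive work was already done in obtaining the spanning tree reduction and the extremal analysis for trees.
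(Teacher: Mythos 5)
Your argument is correct and is exactly the paper's route: the corollary is stated there as an immediate consequence of Theorem~\ref{thm:spanning} (the spanning-tree reduction) and Theorem~\ref{pro.cotaB2} (the tree bound), which is precisely the chain of inequalities you give. Nothing is missing.
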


\begin{remark}
As we pointed out in Section 2, there is a general relationship of among the dominating $2$-broadcast, the dominating broadcast and the classical domination numbers, summarized the inequality chain
$$\gamma_{\stackrel{}{B}}(G)\leq \gamma_{\stackrel{}{B_2}}(G) \leq \gamma (G).$$
The upper bound found in Corollary~\ref{cor:general graphs} has a similar flavour to both known upper bounds for the other parameters:
$\gamma_{\stackrel{}{B}}(G)\leq \lceil \frac{n}{3}\rceil$ (\cite{Herke07}) and $\gamma (G)\leq \lfloor \frac{n}{2} \rfloor$ (\cite{Ore62}).
\end{remark}

\vspace{1.5cc}
\begin{center}
{\bf 5. DOMINATING PATHS}
\end{center}

In the previous section we have obtained an upper bound for the dominating $2$-broadcast number and we have showed that it is tight. However, we have also seen that the family of trees that reach this limit is very specific, suggesting that the graphs that have maximum dominating $2$-broadcast number are themselves of a very specific type, since they must have every spanning tree in such a restricted tree family. This leads us to think that the dominating $2$-broadcast number of graphs with spanning trees of some other particular type, could be limited by a smaller upper bound. Specifically we look for trees that are simple so that the parameter is small, but at the same time, have a rich enough structure so that a wide variety of graphs have them like spanning trees. For this we have chosen caterpillars.

A caterpillar is a tree in which the removal of all leaves yields a path. Graphs having a caterpillar as a spanning tree are said to have a vertex dominating path, that is a path $P$ such that every vertex outside $P$ has a neighbor on $P$. This particular type of spanning trees could be considered as a generalization of Hamiltonian paths (see \cite{HaNa65,Ve83}) and graphs having such extremal spanning trees have been studied. For instance, it is known that sufficient conditions for having a spanning path are related to large enough minimum degree (see \cite{Bro88, Di52}). With these graphs in mind, we have studied the behavior of the dominating $2$-broadcast in caterpillars and we have found a smaller tight upper bound for these particular trees.

\begin{proposition}\label{pro.cotaB2caterpillar}
Let $T$ be a caterpillar of order $n\ge 1$. Then, $\gamma_{\stackrel{}{B_2}}(T)\le \lceil 2n/5 \rceil$.
\end{proposition}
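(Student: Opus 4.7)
The plan is to prove the bound by induction on $n$, along the same lines as the proof of Theorem~\ref{pro.cotaB2} but with the tighter ratio $2/5$ that reflects the simpler structure of caterpillars. Base cases for small $n$ (equivalently, spine length at most $4$) will be verified directly: stars and double stars need cost at most $2$, and any caterpillar of diameter at most $5$ is dominated by one or two well-chosen value-$2$ broadcasts.

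For the inductive step, let $T$ be a caterpillar with spine $x_1,\dots,x_s$ ($s\ge 5$), and for each $i$ let $\ell_i$ denote the number of leaves of $T$ adjacent to $x_i$ (so in particular $\ell_1,\ell_s\ge 1$). The idea is to peel off a \emph{leftmost chunk} $U\subseteq V(T)$ dominated by an explicit broadcast $f$ of cost $c$, such that $T-U$ is still a caterpillar and $5c\le 2|U|$. Applying Lemma~\ref{lem.cotafracciogeneral} with $a=c$, $b=|U|$, $c=2$, $d=5$, together with the inductive hypothesis applied to $T-U$, will immediately give
$$\gamma_{\stackrel{}{B_2}}(T) \le c + \gamma_{\stackrel{}{B_2}}(T-U) \le c + \left\lceil \frac{2(n-|U|)}{5} \right\rceil \le \left\lceil \frac{2n}{5} \right\rceil.$$

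The construction of $U$ will be a short case analysis on the leftmost leaf counts. When $\ell_1\ge 2$ we take $f(x_1)=1$ and let $U$ consist of $x_1$ together with all leaves adjacent to $x_1$, giving $|U|=1+\ell_1\ge 3$ and $c=1$. When $\ell_1=1$ and $\ell_2+\ell_3\ge 1$, we take $f(x_2)=2$ and let $U$ consist of $x_1,x_2,x_3$ together with all leaves adjacent to $x_1$, $x_2$ or $x_3$; since all these leaves lie within distance~$2$ of $x_2$, we get $|U|\ge 5$ and $c=2$. When $\ell_1=1$ and $\ell_2=\ell_3=0$, if also $\ell_4=0$ then the $5$-vertex chunk consisting of the leaf at $x_1$ together with $x_1,x_2,x_3,x_4$ is dominated by $f(x_2)=2$; otherwise we pair $f(x_1)=1$ with $f(x_k)=2$, where $x_k$ is the first spine vertex with $k\ge 4$ and $\ell_k\ge 1$ (such a $k$ exists because $\ell_s\ge 1$), producing a cost-$3$ chunk of size at least~$8$.

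The main obstacle will be the last configuration, in which the leaf-free initial segment $x_2,x_3,\dots,x_{k-1}$ can be long. In that case a single value-$2$ broadcaster placed at $x_k$ no longer reaches back to the leaf at $x_1$, so the chunk must be extended by a second broadcaster on the leaf-free segment in order to preserve the ratio $c/|U|\le 2/5$. Verifying that a suitable placement always exists, and that the remaining graph $T-U$ is still a caterpillar (which constrains $U$ either to include $x_k$ together with all its leaves or not to include $x_k$ at all), will require a sub-case analysis on $\ell_4,\ell_5,\dots$ paralleling but considerably simpler than the one carried out in Theorem~\ref{pro.cotaB2}.
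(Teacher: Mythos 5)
Your overall strategy---induction on $n$, peeling a leftmost chunk $U$ across a single cut-edge and combining Proposition~\ref{prop:general} with Lemma~\ref{lem.cotafracciogeneral} whenever the cost-to-size ratio is at most $2/5$---is exactly the strategy of the paper's proof, and your first two cases ($\ell_1\ge 2$, and $\ell_1=1$ with $\ell_2+\ell_3\ge 1$) are correct. But the final configuration is a genuine gap, not just an omitted routine verification. When $\ell_1=1$, $\ell_2=\ell_3=0$ and $\ell_4\ge 1$, the chunk you propose (broadcasters $f(x_1)=1$, $f(x_4)=2$, ``size at least $8$'') only reaches size $8$ if you cut at the edge $x_6x_7$; but then any leaf hanging from $x_6$ is at distance $3$ from $x_4$ and at distance at least $5$ from $x_1$, so it hears nobody, while cutting earlier at $x_4x_5$ or $x_5x_6$ gives $|U|=5+\ell_4$ or $6+\ell_4+\ell_5$, whose ratio against cost $3$ exceeds $2/5$ when $\ell_4$ is small. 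Short spines ($s\le 6$) also need separate treatment. You acknowledge that a further sub-case analysis is required but do not supply it, and your description of it is internally inconsistent (you speak of a long leaf-free segment $x_2,\dots,x_{k-1}$, yet your previous sub-case forces $k=4$), so as written the induction does not close.

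The gap is also unnecessary: whenever $\ell_2=0$ you can simply cut at the edge $x_2x_3$. The component containing $x_1$ is then the path on $\{\mathrm{leaf}(x_1),x_1,x_2\}$, of order $3$ and with dominating $2$-broadcast number $1$, and since $1/3\le 2/5$ Lemma~\ref{lem.cotafracciogeneral} applies with no information about $\ell_3,\ell_4,\dots$ needed. This is precisely what the paper does. The paper additionally makes a preliminary reduction you skip: if some support vertex carries two or more leaves, deleting one leaf does not change $\gamma_{\stackrel{}{B_2}}$ but decreases $n$, so one may assume every support vertex has exactly one leaf; after that, the whole inductive step is two short cases (peel a $P_3$ at cost $1$ if $u_2$ has no leaf, otherwise peel the $5$- or $6$-vertex piece up to $u_3$ at cost $2$ via $f(u_2)=2$). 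I would rework your case 3 along these lines, and also tighten the base case: ``one or two value-$2$ broadcasters'' costs up to $4$, which exceeds $\lceil 2n/5\rceil$ for $n\le 7$, so the small orders must be checked individually as in Figure~\ref{fig:CaterpillarsOrden6}.
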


%\begin{proof}
\proof
We proceed by induction on the order of the caterpillar. By inspection of all cases, the result is true for caterpillars of order at most 6  (see Figure~\ref{fig:CaterpillarsOrden6}).

\begin{figure}[h]
\begin{center}
\includegraphics [width=1\textwidth]{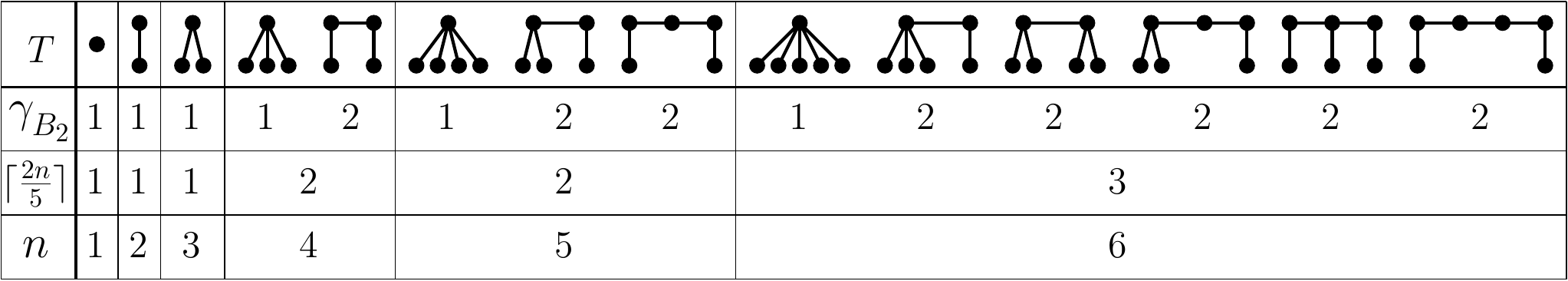}
\caption{All caterpillars $T$ of order $n\le 6$ satisfy $\gamma_{\stackrel{}{B_2}} (T) \le \big\lceil 2 n /5 \big\rceil$. }\label{fig:CaterpillarsOrden6}
\end{center}
\end{figure}

Let $T$ be a caterpillar of order $n\ge 7$ and assume that the statement is true for caterpillars of order less than $n$.
If $T$ has a support vertex $u$ with at least two leaves $v_1, v_2$ hanging from it, then the caterpillar $T^*$ obtained from $T$ by removing vertex $v_1$ has order $n-1<n$, it clearly satisfies $\gamma_{\stackrel{}{B_2}} (T)=\gamma_{\stackrel{}{B_2}} (T^*)$ and by the inductive hypothesis
$$\gamma_{\stackrel{}{B_2}} (T)=\gamma_{\stackrel{}{B_2}} (T^*)\le \lceil 2n^*/5 \rceil \le \lceil 2n/5 \rceil.$$
If every support vertex of $T$ has exactly one leaf hanging from it, then the path $u_1...u_r$ obtained by removing all leaves from $T$ has order at least 4.
Observe that there is a leaf hanging from $u_1$ and another leaf hanging from $u_r$.
If there is no leaf hanging from $u_2$, {then} consider the tree $T_1=T(u_2,u_2u_3)$, which has order 3, and the tree $T_2=T(u_3,u_2u_3)$. By  the inductive hypothesis, Proposition~\ref{prop:general} and Lemma~\ref{lem.cotafracciogeneral}, we have:
$$\gamma_{\stackrel{}{B_2}} (T)\le \gamma_{\stackrel{}{B_2}} (T_1) + \gamma_{\stackrel{}{B_2}} (T_2)\le
 1 + \lceil 2 (n-3) /5 \rceil \le \lceil 2 n /5 \rceil .$$
If there is a leaf hanging from $u_2$, {then} consider the tree $T_1=T(u_3,u_3u_4)$, which has order 5 or 6, and the tree $T_2=T(u_4,u_3u_4)$. The function $f$ such that $f(u_2)=2$ and $f(x)=0$, if $x\not= u_2$, is a dominating 2-broadcast on {$T_1$.}
By the inductive hypothesis, Proposition~\ref{prop:general} and Lemma~\ref{lem.cotafracciogeneral}, we have:
\begin{center}
$\hspace{2cm} \gamma_{\stackrel{}{B_2}} (T)\le \gamma_{\stackrel{}{B_2}} (T_1) + \gamma_{\stackrel{}{B_2}} (T_2)\le
 2 + \lceil 2 (n-5) /5 \rceil \le \lceil 2 n /5 \rceil .$\qed
 \end{center}
%\end{proof}

Having a spanning caterpillar is equivalent to having a dominating path and, as we have already mentioned, sufficient conditions are known so that a graph contains a dominant path.  Some of them are related to the fact that the minimum degree of the graph is large enough. As an example, we quote this result from \cite{FGJW17}.

\begin{theorem}\cite{FGJW17}
For $n\geq 2$, every connected $n$-vertex graph $G$ with $\delta(G) > \frac{n-1}{3}-1$ has a dominating path, and the inequality is sharp.
\end{theorem}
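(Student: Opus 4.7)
The plan is to argue by contradiction. Assume $G$ has no dominating path; it then suffices to show $n \ge 3\delta(G) + 4$, contradicting the hypothesis (which is equivalent to $n \le 3\delta(G) + 3$).

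Let $P = v_0 v_1 \ldots v_k$ be a longest path in $G$ and let $u$ be a vertex undominated by $V(P)$. Maximality of $P$ forces $N(v_0), N(v_k) \subseteq V(P)$. A standard P\'osa rotation argument, exploiting the non-emptiness of $V(G) \setminus V(P)$ and the connectedness of $G$, forbids the simultaneous occurrence of $v_i \in N(v_0)$ and $v_{i-1} \in N(v_k)$ (otherwise $V(P)$ would carry a Hamiltonian cycle and a neighbor outside $V(P)$ would extend $P$). Writing $A' = \{i : v_i \in N(v_0)\}$ and $B' = \{j : v_j \in N(v_k)\}$, the disjointness $(A'-1)\cap B' = \emptyset$ inside $\{0,\ldots,k-1\}$ yields $k \ge |A'|+|B'| \ge 2\delta(G)$, so $|V(P)|\ge 2\delta+1$. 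On the other side, $\{u\}\cup N(u)\subseteq V(G)\setminus V(P)$ supplies at least $\delta+1$ vertices off $P$, already giving the weaker bound $n \ge 3\delta + 2$.

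To recover the two missing vertices, I would exploit a shortest $u$-to-$V(P)$ path $Q\colon u = u_0, u_1, \ldots, u_t = v_j$ with $t\ge 2$. Minimality of $Q$ implies $u_i\notin N(u)$ for $i\ge 2$, and attaching $Q$ to either half of $P$ together with the maximality of $P$ forces $j\in[t,k-t]$. I would then split on $t$. If $t\ge 4$, the vertices $u_1,\ldots,u_{t-1}$ together with $u$ and its $\delta-1$ remaining neighbors (all distinct from the previous ones by the shortest-path property) give $|V(G)\setminus V(P)|\ge \delta + t - 1\ge \delta+3$, and the count closes immediately. If $t\in\{2,3\}$, I would combine P\'osa rotations around the attachment point $v_j$ with the path $Q$: each $v_l$ adjacent to $x := u_{t-1}$ produces, by prepending a second neighbor of $u$ when $t=2$, or by rerouting through $u_1$ when $t=3$, an alternate path whose length must not exceed $k$, forbidding additional pairs of indices. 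A careful bookkeeping then either pushes $|V(P)|$ up to $2\delta+3$ or reveals an extra vertex in the component of $u$ in $G-V(P)$ through the degree-$\delta$ constraint on $x$ (whose neighbours split between $V(P)$ and $V(G)\setminus V(P)$). Either outcome yields $n\ge 3\delta+4$.

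The main obstacle is the $t\in\{2,3\}$ refinement: the rotation bookkeeping is intricate because $v_j$ need not lie in $A'$ or $B'$, and one must orchestrate the degree-$\delta$ constraint at $x$ against several simultaneous P\'osa-type forbidden configurations. For sharpness, take three disjoint copies of $K_{\delta+1}$ together with one extra cut vertex $v$ adjacent to $\delta/3$ vertices of each copy (for $\delta$ divisible by $3$): the resulting graph has $n = 3\delta+4$ vertices and minimum degree exactly $\delta$, and since any path visits $v$ at most once and hence enters at most two of the three cliques, the third clique contains a vertex that is neither on the path nor adjacent to any of its vertices.
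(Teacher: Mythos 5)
The paper does not actually prove this statement; it is quoted from \cite{FGJW17}, so there is no in-paper argument to measure yours against, and the proposal has to stand on its own. Its skeleton is the standard and correct one: the hypothesis is equivalent to $n\le 3\delta+3$; for a longest path $P=v_0\cdots v_k$ the P\'osa rotation (no index $i$ with $v_i\in N(v_0)$ and $v_{i-1}\in N(v_k)$, else $V(P)$ carries a spanning cycle extendable through a neighbour of an off-path vertex) gives $|V(P)|\ge 2\delta+1$; an undominated vertex $u$ contributes $\{u\}\cup N(u)$ off the path; the case $t\ge 4$ of the shortest escape path $Q$ closes correctly; and the three-clique construction does certify sharpness at $n=3\delta+4$.

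The genuine gap is the one you flag yourself: the cases $t=2$ and $t=3$ are where the theorem actually lives, and they are not proved. Elementary counting yields only $n\ge 3\delta+2$ when $t=2$ (and $3\delta+3$ when $t=3$), while the extremal example shows the target $3\delta+4$ is exact, so the missing vertices cannot come for free --- they require a real structural analysis of how $N(x)$, for $x=u_{t-1}$, interacts with $P$, with the rotated paths, and with the forbidden insertions at the attachment vertex $v_j$. The sentence ``a careful bookkeeping then either pushes $|V(P)|$ up to $2\delta+3$ or reveals an extra vertex'' is a statement of intent, not an argument; worse, the proposed dichotomy is not even arithmetically sufficient in the hardest case $t=2$, since its second branch supplies only one additional off-path vertex and so reaches $2\delta+1+\delta+2=3\delta+3$, still one short of $3\delta+4$. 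As written, the proposal establishes the weaker bound $n\ge 3\delta+2$ in the critical configuration; completing it means carrying out the full rotation/insertion case analysis at $v_j$ (essentially the body of the proof in \cite{FGJW17}), which is absent here.
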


Finally, Proposition~\ref{pro.cotaB2caterpillar} and Theorem~\ref{thm:spanning} provide an upper bound of $\gamma_{\stackrel{}{B_2}}$, for graphs having a spanning path.
\begin{corollary}
Let $G$ be a graph of order $n$ and having a spanning path. Then, $\gamma_{\stackrel{}{B_2}}(G)\leq\lceil 2n/5\rceil.$
\end{corollary}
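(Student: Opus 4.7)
The plan is to deduce the corollary directly from the two results cited in the sentence immediately preceding it: Proposition~\ref{pro.cotaB2caterpillar} and Theorem~\ref{thm:spanning}. The key observation is that every path is, trivially, a caterpillar (removing the at most two leaves of a path leaves a path), and more generally every dominating path of $G$ extends to a spanning caterpillar of $G$ by attaching each non-path vertex to one of its neighbors on the path. So the hypothesis of the corollary guarantees the existence of a spanning caterpillar $T$ of $G$.

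First I would make this reduction explicit: given a spanning path (or, reading the corollary in light of the section's discussion, a dominating path) of $G$, construct a spanning caterpillar $T$ of $G$ on the same $n$ vertices. Then I would apply Proposition~\ref{pro.cotaB2caterpillar} to $T$ to obtain $\gamma_{\stackrel{}{B_2}}(T)\le \lceil 2n/5\rceil$. Finally, I would invoke Theorem~\ref{thm:spanning}, which asserts
\[
\gamma_{\stackrel{}{B_2}}(G)=\min\{\gamma_{\stackrel{}{B_2}}(T'):T'\text{ is a spanning tree of }G\},
\]
and use the spanning tree $T$ as a witness in the minimum to conclude $\gamma_{\stackrel{}{B_2}}(G)\le \gamma_{\stackrel{}{B_2}}(T)\le \lceil 2n/5\rceil$.

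There is no genuine obstacle here; the corollary is a one-line consequence of the two previously established results, with the only point worth noting being that a spanning path is a (degenerate) caterpillar so that Proposition~\ref{pro.cotaB2caterpillar} applies. If one instead wanted to weaken the hypothesis to merely possessing a dominating path, the same argument works verbatim after the initial caterpillar-construction step, since the resulting spanning caterpillar still has $n$ vertices and Proposition~\ref{pro.cotaB2caterpillar} still bounds its dominating $2$-broadcast number by $\lceil 2n/5\rceil$.
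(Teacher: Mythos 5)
Your proposal is correct and is exactly the argument the paper intends: the corollary follows by taking a spanning caterpillar $T$ of $G$ (a spanning path is itself a caterpillar, and a dominating path extends to one), applying Proposition~\ref{pro.cotaB2caterpillar} to get $\gamma_{\stackrel{}{B_2}}(T)\le\lceil 2n/5\rceil$, and using Theorem~\ref{thm:spanning} to conclude $\gamma_{\stackrel{}{B_2}}(G)\le\gamma_{\stackrel{}{B_2}}(T)$. The paper gives no further detail, so your write-up matches its (one-line) proof in both substance and route.
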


\vspace{0.5cc}

\noindent {\bf Acknowledgments.}\\
Partially supported by projects MTM2014-60127-P(MINECO), MTM2015-63791-R (MINECO/FEDER), Gen.Cat. DGR2014SGR46 and Junta de Andaluc\'ia FQM305. %H2020-MSCA-RISE project 734922-CONNECT
\vspace{-0.9cm}
\begin{figure}[h!]
\begin{minipage}[l]{0.3\textwidth}
\includegraphics[trim=10cm 6cm 10cm 5cm,clip,scale=0.15]{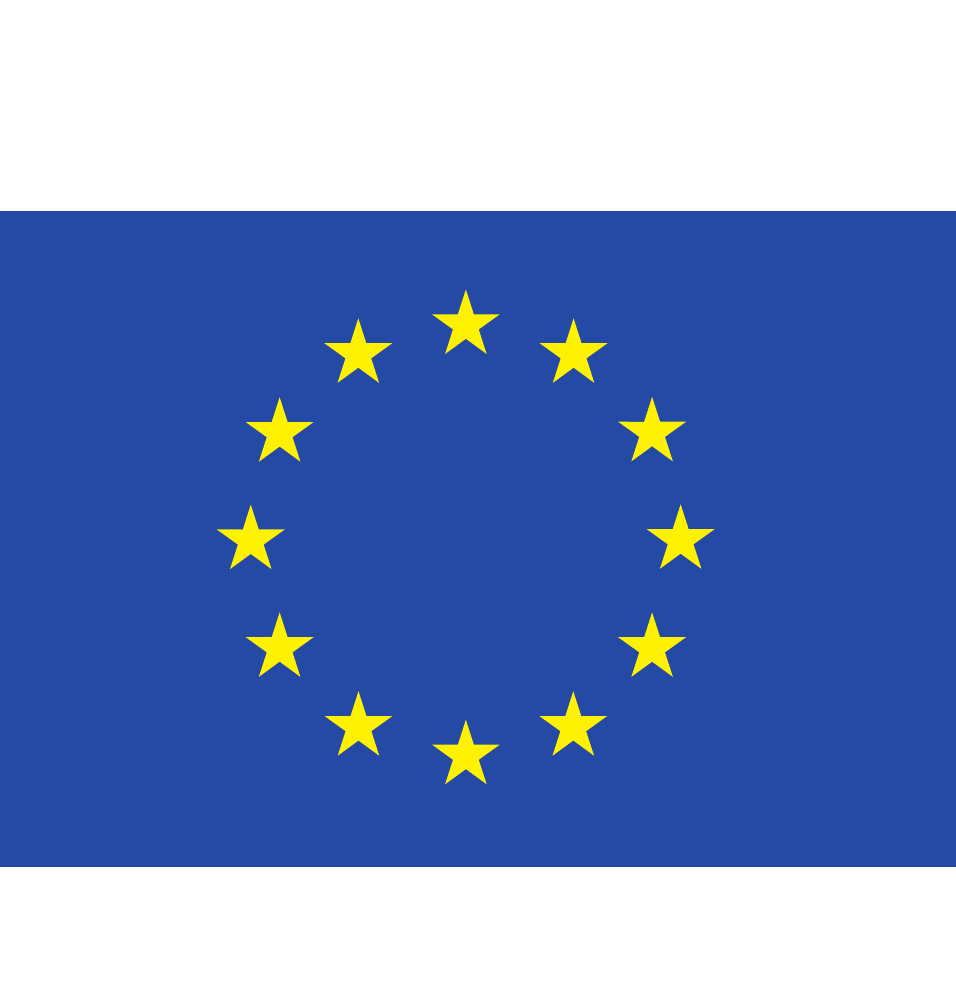}
\end{minipage}\hspace{-2cm}
\begin{minipage}[l][1cm]{0.85\textwidth}
This project has received funding from the European Union's Horizon 2020 research and innovation programme under the Marie Sk\l{}odowska-Curie grant agreement No 734922.
\end{minipage}
\end{figure}
\vspace{-0.6cm}

\vspace{2cc}

\end{document}